\makeatletter \@addtoreset{equation}{section}
\newtheorem{theorem}{Theorem}[section]
\newtheorem{corollary}[theorem]{Corollary}
\newtheorem*{theorem*}{Theorem}
\newtheorem{definition}[theorem]{Definition}
\newtheorem{lemma}[theorem]{Lemma}
\newtheorem{proposition}[theorem]{Proposition}
\newtheorem*{Proposition}{Proposition}
\newtheorem{remark}[theorem]{Remark}
\newtheorem{example}[theorem]{Example}
\def\pr1{\prod\hskip -2.07ex * \hskip 0.9 ex}
\begin{document}
\noindent
\title{Resultants of slice regular polynomials in two quaternionic variables}
\author{Anna Gori $^1$} 
 \thanks{$^1$ Dipartimento di Matematica - Universit\`a di Milano,                
 Via Saldini 50, 20133  Milano, Italy ({anna.gori@unimi.it})}
 \author{Giulia Sarfatti $^2$}
 \thanks{ $^2$ DIISM - Universit\`a Politecnica delle Marche,               
 	Via Brecce Bianche 12,  60131, Ancona, Italy ({g.sarfatti@univpm.it})}
 \author{Fabio Vlacci $^3$}
 \thanks{$^3$ DiSPeS  - Universit\`a di Trieste, Piazzale Europa 1, 34100, Trieste, Italy ({fvlacci@units.it})}

 \begin{abstract} We  introduce a non-commutative
   resultant,  for slice regular
   polynomials in two quaternionic variables, defined in terms of a suitable Dieudonn\'e determinant.
   We use this tool to investigate the existence of
   common zeros of slice regular polynomials.

 \end{abstract}
\keywords{Quaternionic slice regular polynomials, vanishing sets, non-commutative resultant\\
{\bf MSC:} 30G35, 16S36, 15A54} 
\maketitle
\noindent
\section{Introduction}    

{ The description of vanishing loci of polynomials in relation with their factorization, together with the study of polynomial systems, is relevant both in the commutative and in the non-commutative settings; it has not only an intrinsic value, but also natural applications in several fields ranging from pure mathematics, such as algebraic geometry (see, e.g., \cite{CLO}), to advanced applied sciences, such as mechanical sciences, robotics, computer animation or geometric design (see, e.g., \cite{FGGSS, GST, lercher}). 
}

    The study of common zeros and common irreducible factors of polynomials in the complex setting, is naturally related with the notion of {\em
  resultant}. In the non-commutative setting  first results
in this direction have been obtained, e.g., in \cite{ cileni, erik,
  ZZ}  for polynomials in one  quaternionic variable. In \cite{Rasheed}, a notion of resultant in the setting of {\em Ore polynomial rings} in two variables has been defined.
  
In this paper we consider the class $\mathbb H[q_1,q_2]$ of {\em slice regular} polynomials of
two quaternionic variables (see Definition \ref{poly}) and we
introduce a {\em regular resultant} for $P,Q \in\mathbb H[q_1,q_2]$, defined in terms of a (suitable)
determinant, namely an adaptation of the Dieudonn\'e determinant over
the quaternions (see, e.g., \cite{Ash, bren, Dieudonne}). 

Although our definition is formally similar to the one given by Rasheed in \cite{Rasheed}, 
he defines the resultant only  with respect to one of the two variables, while with our approach
we can define two resultants ${\rm Res}(P,Q;q_1)$ and ${\rm Res}(P,Q;q_2)$ with respect to $q_1$ or to $q_2$; these resultants  turn out to be slice regular polynomials  in one quaternionic variable and therefore they belong to $\mathbb{H}[q_2]$  and to $\mathbb{H}[q_1]$ respectively.
In \cite{Rasheed} the author is interested in finding more efficient ways to compute the resultant, while our major aim is to apply the theory of resultants to investigate the existence of common zeros and common irreducible factors of quaternionic slice regular polynomials. 
In particular we prove
\begin{theorem*}
Given two slice regular polynomials $P,Q$ in two quaternionic variables, both their {\em regular resultants} belong to the right ideal generated by $P$ and $Q$.
\end{theorem*}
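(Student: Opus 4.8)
The plan is to transport the classical Sylvester-matrix argument — that the resultant of two polynomials lies in the ideal they generate — into the slice regular, non-commutative setting, replacing the ordinary determinant by the adapted Dieudonn\'e determinant and paying close attention to the side on which factors occur. I treat the case of $\mathrm{Res}(P,Q;q_1)$; the argument for $\mathrm{Res}(P,Q;q_2)$ is identical after exchanging the roles of the two variables. Writing $P$ and $Q$ as polynomials in $q_1$ of degrees $n$ and $m$ with coefficients in $(\mathbb{H}[q_2],*)$, the regular resultant is by definition the adapted Dieudonn\'e determinant of the associated Sylvester matrix $S$: a square matrix of size $N=n+m$ whose rows are the coefficient vectors, in the basis $q_1^{\,n+m-1},\dots,q_1,1$, of the right $*$-multiples $P*q_1^{\,k}$ ($0\le k\le m-1$) and $Q*q_1^{\,l}$ ($0\le l\le n-1$).

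First I would record the defining intertwining relation of the Sylvester matrix: if $w=(q_1^{\,n+m-1},\dots,q_1,1)^{t}$ is the column of powers, then $S*w$ is exactly the column $u$ whose entries are the polynomials $P*q_1^{\,m-1},\dots,P,\,Q*q_1^{\,n-1},\dots,Q$. Each entry of $u$ is a right $*$-multiple of $P$ or of $Q$, hence lies in the right ideal generated by $P$ and $Q$. The goal is then to exhibit the adapted Dieudonn\'e determinant $\det S$ as an $\mathbb{H}[q_1,q_2]$-combination of these entries with the generators kept on the left, that is, in the form $P*A+Q*B$.

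The core step is a non-commutative Cramer/cofactor manipulation. Over the Ore division ring of fractions of $(\mathbb{H}[q_2],*)$, I would invoke the two invariance properties characterizing the (adapted) Dieudonn\'e determinant — invariance under adding a $*$-multiple of one column to another, and multiplicativity under scaling a single column — to replace the last column of $S$ by $u$ without changing the determinant: adding to the last column the right $*$-multiples $(\text{column }j)*q_1^{\,N-j}$ reconstitutes $S*w$ in that slot, while the last coordinate of $w$ being $1$ means the old last column already supplies its own contribution. Call the resulting matrix $S_N$, so $\det S=\det S_N$ and the last column of $S_N$ consists of right-ideal elements. It then remains to expand $\det S_N$ along this column with the entries on the left, producing $\det S_N=\sum_i (S_N)_{iN}*C_i$ with cofactors $C_i\in\mathbb{H}[q_1,q_2]$, whence $\det S\in P*\mathbb{H}[q_1,q_2]+Q*\mathbb{H}[q_1,q_2]$.

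The main obstacle is precisely this last expansion: the Dieudonn\'e determinant is a class in the abelianization of the division ring and admits no literal Laplace expansion, so the commutative identity $S_N\cdot\mathrm{adj}(S_N)=\det(S_N)\,I$ is unavailable. I expect to circumvent this by reducing $S_N$ to triangular form through row operations that do not disturb the membership of the distinguished column in the right ideal, reading off the determinant as the product of the pivots and isolating the contribution of that column as a left factor; equivalently, one invokes the quasideterminantal Cramer rule, which supplies exactly the one-sided expansion needed. Throughout, care is required to keep $P$ and $Q$ on the left of every product — so that one lands in the right rather than the left ideal — and to clear the denominators introduced by passing to the division ring of fractions, so that the final identity $\mathrm{Res}(P,Q;q_1)=P*A+Q*B$ holds with honest slice regular polynomial coefficients $A,B\in\mathbb{H}[q_1,q_2]$.
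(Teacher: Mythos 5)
Your strategy is the classical Sylvester--cofactor argument transported verbatim, and the step you yourself flag as ``the main obstacle'' is exactly where it breaks; the proposed repairs do not close the gap. (i) The column operation that replaces the last column of $S$ by $u=S*w$ multiplies columns on the right by powers of $q_1$. But ${\rm Res}(P,Q;q_1)={\rm Det}^*_{n+m}(A(q_2))$ is a Dieudonn\'e determinant computed over the division ring of quotients of $(\mathbb{H}[q_2],*)$, and $q_1$ is not a scalar of that ring: after the operation the matrix has entries in $\mathbb{H}[q_1,q_2]$, so you are forced into the larger ring of quotients of $\mathbb{H}[q_1,q_2]$, whose abelianization is coarser. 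The identity $\det S=\det S_N$ then holds only modulo a larger commutator subgroup, and the identification of the resultant with a polynomial of $\mathbb{H}[q_2]$ (Proposition \ref{Ra}) is not obviously preserved. (ii) The substitute for the nonexistent Laplace expansion fails for a side reason: row reduction adds \emph{left} $*$-multiples of rows, so the surviving pivot coming from your distinguished column is a left combination $\sum_i\mu_i*u_i$ with $u_i=P*q_1^{k}=q_1^{k}*P$ or $Q*q_1^{l}$; since $\mu_i$ cannot be moved past $P$ or $Q$, such an element lies in the \emph{left} ideal generated by $P$ and $Q$, not in the right ideal $\langle P,Q\rangle$ of the statement. (iii) Even granting an expansion, the Dieudonn\'e determinant is a class determined only up to commutators, and ``$[{\rm Res}]$ equals the class of an element of $\langle P,Q\rangle$'' does not give that a polynomial representative of ${\rm Res}$ lies in $\langle P,Q\rangle$, since the right ideal is not stable under correction by commutators. (A minor further point: your $S$, whose \emph{rows} are the coefficient vectors, is the transpose of the paper's $A(q_2)$, and the Dieudonn\'e determinant is not invariant under transposition in general.)

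The paper's proof takes a different and cleaner route that sidesteps all of this. If ${\rm Res}(P,Q;q_1)\equiv 0$, Lemma \ref{commonleft} already yields $H,K$ with $P*H+Q*K\equiv 0={\rm Res}(P,Q;q_1)$. If ${\rm Res}(P,Q;q_1)\not\equiv 0$, one solves the \emph{inhomogeneous} system $A(q_2)*\mathbf{x}=(0,\ldots,0,1)^t$, i.e.\ $P*H+Q*K\equiv 1$ with coefficients in the ring of quotients; the adapted Cramer formula (Proposition \ref{Cramer}) together with Proposition \ref{Ra} shows that the components of the solution lie in $\mathbb{H}[q_2]*{\rm Res}(P,Q;q_1)^{-*}$, and right-multiplying them by ${\rm Res}(P,Q;q_1)$ clears the denominators and produces honest polynomials $H,K$ with $P*H+Q*K={\rm Res}(P,Q;q_1)$. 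The denominator sits on the \emph{right}, which is precisely what keeps $P$ and $Q$ on the left and lands the resultant in the right ideal. Your closing instinct about clearing denominators is the right one, but to realize it you need the Cramer-rule formulation (which packages the adjugate-type information you are missing), not a cofactor expansion of the Dieudonn\'e determinant.
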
  
\noindent It is possible to prove that all common zeros  $(a,b)$ with $ab=ba$
of two slice regular  polynomials $P,Q$ in two quaternionic variables
sit in the zero locus of their regular resultants. More precisely
\begin{Proposition}
Let  $P$ and $Q$ be  slice regular polynomials in two quaternionic variables. If they have a common zero $(a,b)\in \mathbb{H}^2$ such that $ab=ba$,
then ${\rm Res}(P,Q;q_1)$ vanishes at $q_2=b$ and ${\rm Res}(P,Q;q_2)$ vanishes at $q_1=a$.
\end{Proposition}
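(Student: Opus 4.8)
The plan is to exploit the Theorem stated just above, which says that both regular resultants lie in the right ideal generated by $P$ and $Q$. Concretely, for the resultant with respect to $q_1$, there should exist slice regular polynomials $A,B$ (in the appropriate variables) such that
\begin{equation}
{\rm Res}(P,Q;q_1) = P\cdot A + Q\cdot B,
\end{equation}
where the product is the noncommutative $*$-product of slice regular polynomials and where the left-hand side, being an element of $\mathbb{H}[q_2]$, depends only on $q_2$. I would start by writing down this identity explicitly and then specialize it at the common zero $(a,b)$.

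The key subtlety is that evaluation of a $*$-product of slice regular polynomials is \emph{not} the same as the product of the evaluations — this is the central difficulty in the whole slice regular theory. So the main step is to argue that, under the commutativity hypothesis $ab=ba$, the naive substitution is in fact valid, or at least that the vanishing of $P$ and $Q$ at $(a,b)$ forces ${\rm Res}(P,Q;q_1)$ to vanish at $q_2=b$. I would recall the standard fact about slice regular (or more generally polynomial) evaluation of $*$-products: for polynomials in one variable, $(F*G)(x)$ equals $F(x)\,G(F(x)^{-1} x F(x))$ when $F(x)\neq 0$, and in particular if $F(x)=0$ then $(F*G)(x)=0$. I expect the analogous statement to hold here in the two-variable setting, and the hypothesis $ab=ba$ is exactly what guarantees that $a$ and $b$ generate a commutative subring of $\mathbb{H}$, placing the point $(a,b)$ in a single complex slice $L_I^2$ for some imaginary unit $I$, where the $*$-product restricts to an ordinary commutative polynomial product.

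Carrying this out, I would first observe that since $ab=ba$, there is an imaginary unit $I$ with $a,b \in L_I = \mathbb{R}+I\mathbb{R}$ (the case where $a$ or $b$ is real being immediate). On the slice $L_I$, the $*$-product of slice regular polynomials coincides with the pointwise product, so that
\begin{equation}
{\rm Res}(P,Q;q_1)(b) = P(a,b)\,A(\cdots) + Q(a,b)\,B(\cdots),
\end{equation}
and since $P(a,b)=Q(a,b)=0$ by hypothesis, the right-hand side vanishes. Hence ${\rm Res}(P,Q;q_1)$ vanishes at $q_2=b$. The argument for ${\rm Res}(P,Q;q_2)$ vanishing at $q_1=a$ is entirely symmetric, exchanging the roles of the two variables.

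The step I expect to be the main obstacle is making the evaluation-of-products argument fully rigorous, since in the slice regular category the ideal membership from the Theorem involves the $*$-product and the coefficients $A,B$ need not evaluate in a naive way off the slice $L_I$. The commutativity condition $ab=ba$ is precisely the hypothesis that lets me reduce everything to a single slice where the product becomes commutative, so the crux is to verify carefully that the representation ${\rm Res}(P,Q;q_1)=P*A+Q*B$ restricts correctly to $L_I$ and that the restriction of a two-variable $*$-product to a common slice is indeed the ordinary product of restrictions. Once that compatibility is established, the vanishing is immediate.
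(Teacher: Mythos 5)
Your overall route is the paper's: write ${\rm Res}(P,Q;q_1)=P*H+Q*K$ via Theorem \ref{combinazione}, evaluate at the common zero $(a,b)$, use that the vanishing of the left $*$-factor at a point with $ab=ba$ forces the whole $*$-product to vanish there, and conclude since evaluation is additive and the resultant depends only on $q_2$ (symmetrically for the other resultant). That is exactly how the paper argues, with the evaluation step supplied by Proposition \ref{zeri*} (quoted from \cite{GSV-LAA}).

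The problem is the way you justify that evaluation step. The claim that ``on the slice $L_I$ the $*$-product of slice regular polynomials coincides with the pointwise product'' is false, already in one variable: for $F(q)=qj$ and $G(q)=q$ one has $F*G(q)=q^2j$, so $(F*G)(i)=-j$, while $F(i)G(i)=(ij)\,i=ki=j$. The correct formula $(F*G)(x)=F(x)\,G\bigl(F(x)^{-1}xF(x)\bigr)$ involves a conjugated evaluation point that in general leaves $\mathbb{C}_I$; the $*$-product restricts to the pointwise product on a slice only when the left factor preserves that slice (e.g.\ has coefficients in $\mathbb{C}_I$), which you cannot assume for $P,Q,H,K$. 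Likewise, $ab=ba$ does not make the two-variable $*$-product pointwise on $\mathbb{C}_I\times\mathbb{C}_I$ --- compare Remark \ref{valutazione}. What actually closes the argument is the weaker, one-sided statement you also wrote down (``if $F(x)=0$ then $(F*G)(x)=0$''), whose two-variable analogue at points with $ab=ba$ is precisely Proposition \ref{zeri*}. Replace the slice-restriction claim by an appeal to that proposition and your proof becomes the paper's.
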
	

A first result relating the resultant with the existence of common irreducible factors is the following 
\begin{Proposition}
  Let $P$ and $Q$ be  slice regular polynomials in two quaternionic variables and let $a\in \mathbb{H}$.
	\begin{enumerate}
		\item If $P$ and $Q$ have the common  left factor $(q_1-a)
		$, then
		${\rm Res}(P,Q;q_1)\equiv 0$;
		\item if $P$ and $Q$ have the common  left  factor $(q_2-a)$,  then
		${\rm Res}(P,Q;q_2)\equiv 0$.
		\end{enumerate}
\end{Proposition}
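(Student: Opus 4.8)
The plan is to deduce both parts directly from the (unnumbered) Theorem stated above, which guarantees that each regular resultant lies in the right ideal generated by $P$ and $Q$. I treat part (1), since part (2) is obtained by interchanging the roles of $q_1$ and $q_2$. By that Theorem there exist slice regular polynomials $A,B\in\mathbb{H}[q_1,q_2]$ with
$$
{\rm Res}(P,Q;q_1)=P*A+Q*B,
$$
where $*$ denotes the (associative and distributive) product on $\mathbb{H}[q_1,q_2]$.

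Next I would bring in the hypothesis that $(q_1-a)$ is a common left factor, writing $P=(q_1-a)*P_1$ and $Q=(q_1-a)*Q_1$ for suitable $P_1,Q_1\in\mathbb{H}[q_1,q_2]$. Substituting into the identity above and invoking associativity together with left distributivity of $*$, I obtain
$$
{\rm Res}(P,Q;q_1)=(q_1-a)*(P_1*A+Q_1*B),
$$
so that $(q_1-a)$ is a left factor of the resultant as well. Setting $R:=P_1*A+Q_1*B$, the statement reduces to showing that $R=0$.

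The heart of the argument is a degree comparison in the variable $q_1$. On one hand, ${\rm Res}(P,Q;q_1)$ belongs to $\mathbb{H}[q_2]$ and therefore has $q_1$-degree $0$. On the other hand, grading $\mathbb{H}[q_1,q_2]$ by the degree in $q_1$ and using the explicit monomial form of the $*$-product, left multiplication by $(q_1-a)$ sends a nonzero polynomial of $q_1$-degree $d$ to one of $q_1$-degree $d+1$: if $R_d\neq 0$ is the leading $q_1$-homogeneous component of $R$, then the top term $q_1*R_d$ has $q_1$-degree $d+1$ and cannot be cancelled by $-a*R$, which has $q_1$-degree at most $d$, because $a\in\mathbb{H}$ is a constant. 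Hence, were $R\neq 0$, the product $(q_1-a)*R$ would have $q_1$-degree at least $1$, contradicting that the left-hand side has $q_1$-degree $0$. Therefore $R=0$ and ${\rm Res}(P,Q;q_1)\equiv 0$, which is (1); replacing $q_1$ by $q_2$ throughout yields (2).

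I expect the only delicate point to be this degree bookkeeping, namely verifying that in the two-variable slice regular product left multiplication by $(q_1-a)$ raises the $q_1$-degree by exactly one, with no cancellation at the top $q_1$-degree. This should follow from the monomial description of the $*$-product set up earlier in the paper (the exponents of $q_1$ add under the product, while the scalar $a$ contributes only to strictly lower $q_1$-degree), together with the associativity and distributivity already recorded. Once these structural facts are in place, the proof is immediate, and no computation with the Dieudonn\'e determinant itself is required.
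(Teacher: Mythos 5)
Your argument is correct, and it follows the paper's own strategy in its decisive first step: both you and the authors invoke the ideal-membership theorem (Theorem \ref{combinazione}) to write ${\rm Res}(P,Q;q_1)=P*A+Q*B=(q_1-a)*(P_1*A+Q_1*B)$, reducing everything to the claim that a polynomial of $\mathbb{H}[q_2]$ that is left $*$-divisible by $(q_1-a)$ must vanish identically. Where you diverge is in how that last claim is established. The paper argues function-theoretically: by Proposition \ref{mlineare} the product $(q_1-a)*R_1$ vanishes on $\{a\}\times C_a$, and since the resultant depends only on $q_2$ and $C_a$ contains a whole slice $\mathbb{C}_{I_a}$, the one-variable Identity Principle (Theorem \ref{Id}) forces it to be zero. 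You instead argue algebraically, by comparing degrees in $q_1$: left $*$-multiplication by $(q_1-a)$ raises the $q_1$-degree of a nonzero polynomial by exactly one (the top coefficient of $(q_1-a)*R$ in $q_1$-degree $d+1$ is literally the leading coefficient $R_d\in\mathbb{H}[q_2]$ of $R$, so no cancellation is possible and no appeal to the absence of zero divisors is even needed), while the resultant has $q_1$-degree zero. Your finish is arguably cleaner and more elementary --- it stays entirely inside the graded ring structure of $[\mathbb{H}[q_2]]_{\mathcal{SR}}[q_1]$ and avoids both the zero-set characterization and the Identity Principle --- whereas the paper's route fits the zero-set-oriented narrative of the section and reuses machinery (Proposition \ref{mlineare}) already needed elsewhere. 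Only a cosmetic remark: your letters $A,B$ for the coefficient polynomials collide with the paper's notation for the Sylvester matrices $A(q_2)$ and $B(q_1)$; rename them to avoid confusion.
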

In the other direction, we obtain the following result, stated in terms of the {\em symmetrizations} of the involved slice regular polynomials (see Definition \ref{R-coniugata2}).
\begin{Proposition}
	Let $P$ and $Q$ be slice regular polynomials in two quaternionic variables. If  \ ${\rm Res}(P,Q;q_1)\equiv 0$ or ${\rm Res}(P,Q;q_2)\equiv 0$, then the symmetrized polynomials $P^s$ and $Q^s$ have a common factor.
\end{Proposition}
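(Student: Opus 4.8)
The plan is to mimic the classical argument that a vanishing resultant forces a common factor, but to carry out the decisive step in the \emph{symmetrized} (hence commutative) setting, where unique factorization is available. I treat only the case $\mathrm{Res}(P,Q;q_1)\equiv 0$; the statement for $q_2$ follows by exchanging the roles of the two variables.

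First I would write $P$ and $Q$ as polynomials in $q_1$ with coefficients in $\mathbb{H}[q_2]$, of $q_1$-degrees $n$ and $m$ respectively, and recall that $\mathrm{Res}(P,Q;q_1)$ is the Dieudonn\'e determinant of the associated Sylvester matrix $S$, whose entries lie in $\mathbb{H}[q_2]$. Since $\mathbb{H}[q_2]$ is an Ore domain, it embeds into a skew field of fractions $\mathcal{F}$, and the defining property of the Dieudonn\'e determinant is that $\mathrm{Ddet}(S)=0$ exactly when $S$ fails to be invertible over $\mathcal{F}$. Thus $\mathrm{Res}(P,Q;q_1)\equiv 0$ produces a nonzero kernel vector of $S$ over $\mathcal{F}$; clearing denominators and reading off the block structure of the Sylvester matrix in the usual way, this yields $*$-polynomials $A,B\in\mathbb{H}[q_1,q_2]$, not both zero, with $\deg_{q_1}A<m$ and $\deg_{q_1}B<n$, satisfying a B\'ezout-type relation $A*P=B*Q$ (up to sign).

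The second step is to pass to symmetrizations. Using that symmetrization is multiplicative for the $*$-product, $(F*G)^s=F^s\,G^s$, that it is insensitive to sign, and that $F^s\equiv 0$ if and only if $F\equiv 0$ (because $\mathbb{H}[q_1,q_2]$ has no $*$-zero-divisors), I would symmetrize $A*P=B*Q$ to obtain the identity $A^s P^s=B^s Q^s$ in the \emph{commutative} polynomial ring $\mathbb{R}[q_1,q_2]$, where now $\deg_{q_1}A^s=2\deg_{q_1}A<2m=\deg_{q_1}Q^s$. Since $\mathbb{R}[q_1,q_2]$ is a unique factorization domain, if $P^s$ and $Q^s$ were coprime then $Q^s$ would divide $A^s$, which is impossible for degree reasons unless $A^s\equiv 0$; this would force $A\equiv 0$ and then $B\equiv 0$, contradicting the nontriviality of the kernel vector. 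Hence $P^s$ and $Q^s$ share a non-trivial common factor, as claimed.

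I expect the main obstacle to be the first step: making rigorous the transition from the identical vanishing of the Dieudonn\'e determinant to an honest polynomial relation $A*P=B*Q$ with the correct $q_1$-degree bounds. This requires verifying that $\mathbb{H}[q_2]$ is an Ore domain (so that the fraction skew field and the singularity criterion for $\mathrm{Ddet}$ are legitimate), and checking that clearing denominators in the kernel vector and returning to the $*$-product respects the Sylvester block structure, so that the bounds $\deg_{q_1}A<m$ and $\deg_{q_1}B<n$ survive. A secondary point to confirm is that the symmetrization of a two-variable slice regular polynomial genuinely lands in the commutative ring $\mathbb{R}[q_1,q_2]$, and that multiplicativity and degree-doubling hold there, so that the unique-factorization argument applies verbatim.
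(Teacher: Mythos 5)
Your proposal is correct and follows essentially the same route as the paper: the vanishing of the resultant is converted into a B\'ezout-type relation with degree bounds (this is exactly Lemma \ref{commonleft}), which is then symmetrized via $(F*G)^s=F^s*G^s$ and settled in the commutative ring $\mathbb{R}[q_1,q_2]$ --- the paper phrases this last step as the vanishing of the commutative resultant of $P^s$ and $Q^s$, which is the same unique-factorization argument you spell out. One small caution: the Sylvester matrix $A(q_2)$ of the paper encodes \emph{right} cofactors, so the relation extracted from its kernel reads $P*H+Q*K\equiv 0$ rather than $A*P=B*Q$ with left cofactors; since symmetrization is insensitive to the order of the factors, this does not affect the rest of your argument.
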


The present paper is organized as follows:
in Section \ref{sec1} we shortly recall the main definitions and results from the theory of
slice regular and semi-regular functions which will be
used in the sequel.
In Section \ref{sceD} we introduce a suitable version of Dieudonn\'e determinant for
square matrices with entries in a skew-field, which we specialize to the case of semi-regular functions. In Section \ref{RES} we apply
this determinant to define a notion of regular resultant for slice
regular polynomials and we use this new notion to investigate the existence of common zeros and common irreducible factors.
Finally, in analogy with the complex case, we apply regular resultants to study multiplicities of zeros of slice regular polynomials.

\section{Introduction to quaternionic slice regular functions}\label{sec1}
\noindent
Let $\mathbb{H}{=\mathbb{R}+i\mathbb{R}+j\mathbb{R}+k\mathbb{R}}$ denote the skew field of
quaternions and let $\mathbb{S}=\{q \in \mathbb{H} \ : \ q^2=-1\}$ be
the two dimensional sphere of quaternionic imaginary units.  Then
\[ \mathbb{H}=\bigcup_{I\in \mathbb{S}}(\mathbb{R}+\mathbb{R} I),  
\]
where, for any $I\in \mathbb{S}$, the ``slice'' $\mathbb{C}_I:=\mathbb{R}+\mathbb{R} I$ can be
identified with the complex plane $\mathbb{C}$.  { Hence, any $q\in \mathbb{H}$ can be
  expressed as $q=x+yI$ with $x,y \in \mathbb{R}$ and $I \in
  \mathbb{S}$. The {\it real part }of $q$ is ${\rm Re}(q)=x$ and its
         {\it imaginary part} is ${\rm Im}(q)=yI$; the {\it conjugate}
         of $q$ is $\bar q:={\rm Re}(q)-{\rm Im}(q)$.  For any non-real
         quaternion $a\in \mathbb{H}\setminus \mathbb{R}$ we will
         denote by $I_a:=\frac{{\rm Im}(a)}{|{\rm Im}(a)|}\in
         \mathbb{S}$ and by $\mathbb{S}_a:=\{{\rm Re}(a)+I |{\rm
           Im}(a)| \ : \ I\in \mathbb{S}\}$. If $a\in \mathbb{R}$,
         then $I_a$ is any imaginary unit.}\\ Let us now recall the
definition of slice regular functions of a quaternionic variable.  In
view of the setting we will be working on, we will restrict to the
simplest case possible, that is to slice regular functions defined on
open balls centered at the origin, $B_R:=\{q \in \mathbb{H} \ :
\ |q|<R \}$, with radius $R>0$, possibly infinite.

For all details and a more general version of this theory  
we refer the reader to
\cite{libroGSS}. 

\begin{definition}
Let $R>0$. A function $f: B_R \to \mathbb{H}$ is called
\emph{slice regular} if the restriction $f_I$ of $f$ to $B_R\cap \mathbb{C}_I$
is holomorphic {for any $I\in\mathbb{S}$}, i.e., if $f_I$ has continuous partial derivatives and it is such that
\[\overline{\partial}_If_I(x+yI):=\frac{1}{2}\left(\frac{\partial}{\partial x}+I\frac{\partial}{\partial y}\right)f_I(x+yI)=0\]
for all $x+yI\in B_R 
\cap \mathbb C_I$ {and for any $I\in\mathbb{S}$}.
\end{definition}
In the sequel we will denote by $\mathcal{SR}(B_R)$ the class of
slice regular functions on $B_R \subseteq \mathbb{H}$.
{
A class of slice regular functions, crucial for our study,
is the set of {\em slice regular polynomial functions}, which we denote by
\[\mathbb{H}[q]:=\left\{P(q)=\sum_{n=0}^{N}q^na_n : \deg P:=N\in\mathbb{N},  a_n \in \mathbb{H} \  
\text{for any $n=0,\ldots, N$}\right\}.\]
More in general, the class of {\em slice regular power
  series} of the form $\sum\limits_{n\in\mathbb N}q^na_n$ with $a_n\in\mathbb{H}$, convergent in $B_R$, 
coincides with the class of regular function in
$B_R$.

\begin{definition} Let $R>0$ and
  let $f:B_R \to \mathbb{H}$
be a slice regular function. For any $I\in\mathbb{S}$ and
any point $q=x+yI$ in $B_R$ 
we define the  {\em Cullen derivative} of $f$ at $q$ as
$$\partial_C f(x+yI):=\dfrac{1}{2}\left(\frac{\partial}{\partial x}
-I\frac{\partial}{\partial y}\right)f_I(x+yI)$$
\end{definition}

}
While the sum of two slice regular
functions is clearly slice regular, an appropriate notion of
multiplication in $\mathcal{SR}(B_R)$ is given by the so called {\em $*$-product}.
\begin{definition}
	If $f(q)=\sum\limits_{n\in\mathbb N} q^n a_n$ and
        $g(q)=\sum\limits_{n\in\mathbb N} q^n b_n$ are two slice regular functions in $B_R$, then the $*$-product of $f$ and $g$ is the slice regular
        function in $B_R$ defined by
	$$
	f\ast g(q):=\sum_{n\in\mathbb N}q^n\sum_{k=0}^na_k b_{n-k}.
	$$
	\end{definition}
\noindent Notice that the $*$-product is associative but not commutative in general.
Therefore $(\mathcal{SR}(B_R), +, *)$ is a non-commutative ring.  
Furthermore the Leibniz rule holds for $*$-product of slice regular functions with respect to the Cullen derivative.

\noindent It is not difficult to see that if $f(q)=\sum_{n \in
  \mathbb{N}}q^na_n \in \mathcal{SR}(B_R)$ has real coefficients (i.e. $f$ is {\em
  slice preserving}), then, for any slice regular function $g \in \mathcal{SR}(B_R)$,
$
f*g(q)=
f(q)\cdot g(q)=g*f(q),
$
on $B_R$.  

\noindent In general, the relation of the $*$-product with the usual pointwise product is the following:
	$$
	f* g(q)=\left\{\begin{array}{c cl}
	0 & &\text{if $f(q)=0 $}\\
	f(q)\cdot g(f(q)^{-1}\cdot q \cdot f(q)) & &\text{if $f(q)\neq 0 $\,,}
	\end{array}\right.
	$$
	for any $f,g \in \mathcal{SR}(B_R,+,*)$.
	\noindent
        Notice that $f(q)^{-1}\cdot q\cdot f(q)$ belongs to the same
        sphere $x+y\mathbb{S}$ as $q$. Hence each zero of $f*g$ in
        $x+y\mathbb{S}$ is given either by a zero of $f$ or by a point
        which is a conjugate of a zero of $g$ in the same sphere.
	
A peculiar aspect of slice regular functions is the structure of their zero
sets. In fact, besides isolated zeros, they can also vanish on two
dimensional spheres. As an example, the slice regular polynomial $q^2+1$ vanishes on
the entire sphere of imaginary units $\mathbb{S}$.  It can be proven
that also spherical zeros cannot accumulate.
\begin{theorem}
	Let $f$ be a slice regular function on $B_R$. If $f$ does not vanish identically, then its zero
        set consists of isolated points or isolated $2$-spheres of the
        form $x +y \mathbb{S}$ with $x,y \in \mathbb{R}$, $y \neq 0$.
\end{theorem}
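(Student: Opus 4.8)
The plan is to combine the local holomorphy of $f$ on each slice with the affine structure that slice regularity imposes on each sphere, and then to extract isolation from the identity principle for holomorphic functions of one complex variable.

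First I would fix an imaginary unit $I\in\mathbb{S}$ together with a $J\in\mathbb{S}$ orthogonal to it, so that $\{1,I,J,IJ\}$ is a real basis of $\mathbb{H}$, and invoke the Splitting Lemma (see \cite{libroGSS}) to write the restriction $f_I=F+GJ$ on $B_R\cap\mathbb{C}_I$, with $F,G\colon B_R\cap\mathbb{C}_I\to\mathbb{C}_I$ holomorphic in the usual complex sense. Since $\mathbb{H}=\mathbb{C}_I\oplus\mathbb{C}_I J$, a point $z=x+yI$ is a zero of $f$ if and only if $F(z)=G(z)=0$. Because $f\not\equiv 0$, the representation formula forces $f_I\not\equiv 0$ on every slice, so $F$ and $G$ cannot both vanish identically; their common zeros then form a discrete subset of $B_R\cap\mathbb{C}_I$. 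This already shows that the zeros of $f$ lying on a single slice are isolated points of that slice.

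Next I would record the behaviour of $f$ on a fixed sphere $x+y\mathbb{S}$ with $y\neq 0$. By the representation formula the value $f(x+yI)$ is an affine function of $I$, i.e.\ there exist $b,c\in\mathbb{H}$ depending only on $(x,y)$ with $f(x+yI)=b+Ic$ for every $I\in\mathbb{S}$. Solving $b+Ic=0$ splits into three cases: if $b=c=0$ the whole sphere lies in the zero set; if $c=0$ and $b\neq 0$ there is no zero on the sphere; and if $c\neq 0$ the unique candidate $I=-bc^{-1}$ yields at most one zero (exactly one when $-bc^{-1}\in\mathbb{S}$). Hence every sphere contributes either all of its points or at most one point to the zero set, which is precisely the dichotomy in the statement.

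It then remains to upgrade isolation \emph{within a slice} to isolation \emph{in $\mathbb{H}$}, and here I would pass to the symmetrization $f^{s}=f\ast f^{c}$. This function is slice preserving, its slice restrictions are holomorphic with real power–series coefficients, and it does not vanish identically since $f\not\equiv 0$. Using the description of the zeros of a $\ast$-product recalled above, $f^{s}$ vanishes on a sphere exactly when $f$ has a zero somewhere on that sphere; being zeros of a non-trivial holomorphic function on each slice, these spheres are isolated there, and by the conjugation symmetry of $f^{s}$ they organize into isolated $2$-spheres and isolated real points. Combining this with the spherical dichotomy, each isolated zero–sphere of $f^{s}$ is either entirely a $2$-sphere of zeros of $f$ or carries a single zero of $f$, which is then genuinely isolated in $\mathbb{H}$ because no neighbouring sphere can accumulate zeros; the real zeros are isolated points. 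The main obstacle I anticipate is exactly this last bookkeeping around $f^{s}$: one must check carefully that $f^{s}$ is slice preserving and nonzero and that its vanishing set coincides with the union of the zero–spheres of $f$, since isolation inside one slice alone does not a priori rule out zeros on nearby spheres accumulating transversally to a single-point zero.
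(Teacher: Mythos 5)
The paper states this theorem without proof, recalling it as a known structure result from the one-variable theory in \cite{libroGSS}, and your argument is a correct reconstruction of the standard proof given there: Splitting Lemma for isolation within a slice, the representation formula for the ``whole sphere, no zero, or exactly one zero'' trichotomy on each sphere $x+y\mathbb{S}$, and the symmetrization $f^{s}$ (slice preserving, not identically zero since $\mathcal{SR}(B_R)$ has no zero divisors, with circular zero set containing $Z_f$) to promote slice-wise isolation to isolation in $\mathbb{H}$. The one point you flag as delicate is handled correctly: since $Z_f\subseteq Z_{f^{s}}$ and the zero spheres of $f^{s}$ are isolated, no zeros of $f$ can accumulate from neighbouring spheres onto a single-point zero, so no gap remains.
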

Furthermore it holds
\begin{theorem}[Identity Principle]\label{Id}
	Let $f, g$ be two slice regular functions in $B_R$. If there exists
	$I \in \mathbb S$ such that $f\equiv g$ on
	$B_R\cap
	\mathbb{C}_{I}$, then
	$f\equiv g$ on $B_R$.
	\end{theorem}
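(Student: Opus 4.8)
The plan is to reduce the statement to the classical identity principle for holomorphic functions of one complex variable. First I would set $h:=f-g$; since the sum (and hence the difference) of slice regular functions is slice regular, $h\in\mathcal{SR}(B_R)$, and the hypothesis becomes $h\equiv 0$ on $B_R\cap\mathbb{C}_I$. It then suffices to prove that such an $h$ vanishes identically on $B_R$. Here I would invoke the representation recalled above: every slice regular function on $B_R$ coincides with a convergent power series, so $h(q)=\sum_{n\in\mathbb N}q^n a_n$ with $a_n\in\mathbb{H}$ and radius of convergence at least $R$. The goal is thus reduced to showing that $a_n=0$ for every $n$.

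Next I would restrict to the slice $\mathbb{C}_I$ and use a splitting adapted to the non-commutativity. Choose $J\in\mathbb{S}$ with $IJ=-JI$, so that $\{1,J\}$ is a basis of $\mathbb{H}$ as a left $\mathbb{C}_I$-vector space, i.e. $\mathbb{H}=\mathbb{C}_I\oplus\mathbb{C}_I J$. Writing each coefficient as $a_n=\alpha_n+\beta_n J$ with $\alpha_n,\beta_n\in\mathbb{C}_I$, and setting $z=x+yI\in B_R\cap\mathbb{C}_I$, I would rewrite the restriction as
\[
h_I(z)=\sum_{n\in\mathbb N}z^n a_n=F(z)+G(z)\,J,\qquad F(z)=\sum_{n\in\mathbb N}z^n\alpha_n,\quad G(z)=\sum_{n\in\mathbb N}z^n\beta_n,
\]
where, after the identification $\mathbb{C}_I\cong\mathbb{C}$, both $F$ and $G$ are genuine holomorphic functions of the single complex variable $z$ on the disc $B_R\cap\mathbb{C}_I$, taking values in $\mathbb{C}_I$.

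Finally, since $h_I\equiv 0$ by hypothesis and the decomposition $\mathbb{H}=\mathbb{C}_I\oplus\mathbb{C}_I J$ is direct, both components must vanish: $F\equiv 0$ and $G\equiv 0$ on $B_R\cap\mathbb{C}_I$. The classical identity principle (equivalently, uniqueness of Taylor coefficients) for holomorphic functions on a disc then forces $\alpha_n=0$ and $\beta_n=0$ for all $n$, whence $a_n=\alpha_n+\beta_n J=0$ for every $n$. Therefore $h\equiv 0$ on all of $B_R$, that is, $f\equiv g$, as claimed.

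I expect the only delicate point to be the extraction of the coefficients: one cannot argue directly from $\sum_n z^n a_n\equiv 0$ that $a_n=0$ by naive power-series uniqueness, since $z^n\in\mathbb{C}_I$ and the $a_n\in\mathbb{H}$ need not commute and the coefficients are quaternionic. The splitting $a_n=\alpha_n+\beta_n J$ is precisely what converts the problem into two \emph{independent} scalar complex power series, to which ordinary uniqueness applies. The remaining ingredients, namely that $h$ is a convergent power series and that the difference of slice regular functions is slice regular, are supplied by the material recalled above.
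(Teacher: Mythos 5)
Your proof is correct: the paper states this Identity Principle without proof (it is recalled from \cite{libroGSS}), and the argument you give --- reduce to $h=f-g$, expand in a power series, split the restriction to $\mathbb{C}_I$ as $F+GJ$ with $F,G$ holomorphic via the basis $\{1,J\}$ of $\mathbb{H}$ over $\mathbb{C}_I$, and invoke uniqueness of Taylor coefficients --- is precisely the standard splitting-lemma proof from that reference. You also correctly identify the one delicate point, namely that quaternionic coefficients prevent a naive appeal to power-series uniqueness and that the splitting is what resolves it.
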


In order to define a {\em regular inverse}, we need to introduce the following operations.
\begin{definition}\label{R-coniugata}
	Let $f(q)=\sum_{n \in \mathbb{N}}q^na_n$ be a slice regular function
        on $B_R$. Then the \emph{regular conjugate} of $f$ is the
        slice regular function defined on $B_R$ by $f^c(q)=\sum_{n \in
          \mathbb{N}}q^n\overline{a_n}$, and the \emph{symmetrization}
        of $f$ is the slice regular function defined on $B_R$ by $f^s =
        f*f^c = f^c*f$.
\end{definition}
\noindent The \emph{regular inverse} $f^{-*}$ of a slice regular function $f:B_R \to \mathbb{H}$ is defined as
\begin{equation}
f^{-*}=(f^{s})^{-1}f^c,
\end{equation}
and it is slice regular on $B_R\setminus Z_{f^s}$, where $Z_{f^s}$ denotes the zero set of the symmetrization $f^s$.
The function  $f^{-*}$ is a semi--regular function in the sense of \cite{singularities}.
{

\noindent We now recall the definition of Ore domains, which will be used in the sequel (see, e.g., \cite{singularities}).
\begin{definition}
	A {\em left (right) Ore domain} is a domain $(D,+,\star)$, that is a
        ring without zero divisors, such that $D\star a \cap D\star b
        \neq\{0\}$ (respectively, $a\star D \cap b\star D \neq \{0\}$), 
        for any $a,b \in D\setminus\{0\}$, where
  $D\star a:=\{d\star a  \ : \ d \in D\}$ (and $a \star D:=\{a\star d  \ : \ d \in D\}$).
	\end{definition}

}

\begin{theorem} Let $(D,+,\star)$ be a left Ore domain. Then
the set of formal quotients 
$ L = \{a^{-\star}\star b \ : \ a,b\in D\}$ can be endowed with operations $+,\star$ such that:
\begin{enumerate}
	\item $D$ is isomorphic to the subring $\{1^{-\star}\star a \ : \ a\in D\}$ of $ L$;
\item	$ L$ is a skew field, i.e. a ring where every non-zero element $a^{-\star}\star b$ has a multiplicative
inverse $(a^{-\star}\star b)^{-\star}=b^{-\star}\star a$.
\end{enumerate}
The ring $ L$ is called { \em the classical left ring of quotients of } $D$  and, up to isomorphism,
it is the only ring having the properties {\em (1)} and {\em (2)}.
\end{theorem}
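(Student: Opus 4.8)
The plan is to carry out the classical \emph{Ore localization} construction, realizing $L$ as a set of equivalence classes of formal left fractions and transporting the operations of $D$ to them by repeatedly invoking the left Ore condition $D\star a\cap D\star b\neq\{0\}$. Concretely, I would represent the symbol $a^{-\star}\star b$ by the pair $(a,b)\in(D\setminus\{0\})\times D$ and declare $(a_1,b_1)\sim(a_2,b_2)$ precisely when there exist $c_1,c_2\in D$ with $c_1\star a_1=c_2\star a_2\neq 0$ and $c_1\star b_1=c_2\star b_2$. Reflexivity and symmetry are immediate; for transitivity I would, given two chains of relations through $(a_2,b_2)$, use the Ore condition to find a nonzero common left multiple of the two left multipliers of $a_2$ and combine the relations, using that $D$ has no zero divisors to keep the aligning coefficient nonzero. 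I denote the class of $(a,b)$ by $[a,b]$ and set $L$ to be the quotient set.

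Next I would define the two operations on representatives. For addition, given $[a_1,b_1]$ and $[a_2,b_2]$, the Ore condition produces $c_1,c_2$ with $c_1\star a_1=c_2\star a_2=:a\neq 0$, a common denominator, and I set $[a_1,b_1]+[a_2,b_2]:=[a,\,c_1\star b_1+c_2\star b_2]$. For multiplication, to interpret $a_1^{-\star}\star b_1\star a_2^{-\star}\star b_2$ I would first rewrite $b_1\star a_2^{-\star}$: applying Ore to $b_1$ and $a_2$ yields $a',b'\in D$ with $a'\star b_1=b'\star a_2$ (with $b_1=0$ giving the zero class directly), whence I set $[a_1,b_1]\star[a_2,b_2]:=[a'\star a_1,\,b'\star b_2]$. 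The bulk of the work is then to verify that both operations are independent of the representatives $(a_i,b_i)$ and of the various choices of common multiples, which in each case reduces to a further application of the Ore condition to reconcile two admissible choices.

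With the operations in hand I would check the ring axioms. The additive group structure and distributivity are routine once addition is well defined; the delicate point is associativity of $\star$, which requires aligning three denominators simultaneously and is the one genuinely laborious computation. The embedding of $D$ is $\iota\colon a\mapsto[1,a]$: it is injective because $[1,a]=[1,a']$ forces $c_1=c_2\neq 0$ and hence $a=a'$, it carries sums and products of $D$ to the corresponding operations in $L$ (the denominator $1$ making the Ore steps trivial), and its image is exactly $\{1^{-\star}\star a\}$, giving property (1). For property (2), every nonzero class is of the form $[a,b]$ with $b\neq 0$, and a direct computation with $a'=b'=1$ shows $[a,b]\star[b,a]=[a,a]=[1,1]$ and symmetrically, so $[b,a]$ is the two-sided inverse; this is precisely the asserted formula $(a^{-\star}\star b)^{-\star}=b^{-\star}\star a$, and $L$ is a skew field.

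Finally, for uniqueness up to isomorphism I would take any ring $L'$ containing an isomorphic copy $j(D)$ of $D$ as in (1) and in which every element has the form $j(a)^{-\star}\star j(b)$ as in (2), and define $\Phi\colon L\to L'$ by $\Phi([a,b])=j(a)^{-\star}\star j(b)$. The relation defining $\sim$ is preserved by $j$, so $\Phi$ is well defined; it is a ring homomorphism by the same common-denominator manipulations used to define $+$ and $\star$, and it is bijective because surjectivity is hypothesis (2) on $L'$ while injectivity follows from $j$ being a monomorphism together with the domain property. I expect the main obstacle to be the well-definedness and associativity of multiplication: each verification forces one to clear denominators and reorder products, and keeping the intermediate multipliers nonzero is exactly where the Ore condition together with the absence of zero divisors is used repeatedly.
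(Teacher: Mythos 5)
The paper does not actually prove this theorem: it is quoted as a classical result on Ore localization, with the general theory deferred to the cited monograph of Cohn (and, for the slice regular instance, to Stoppato's work), so there is no in-paper argument to compare yours against. Your outline is the standard construction of the classical left ring of quotients and is correct in all essentials: the equivalence $(a_1,b_1)\sim(a_2,b_2)$ via common left multiples of the denominators is the right one, the formulas for $+$ and $\star$ (rewriting $b_1\star a_2^{-\star}$ as $(a')^{-\star}\star b'$ via the Ore condition) are the standard ones, the inverse computation $[a,b]\star[b,a]=[a,a]=[1,1]$ is correct, and the uniqueness argument via the map $[a,b]\mapsto j(a)^{-\star}\star j(b)$ is the usual universal-property proof. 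The genuinely laborious steps you defer --- well-definedness of $\star$ under change of representatives and of the chosen common multiples, and associativity of $\star$ --- are indeed where all the work lies, and your stated strategy (reconcile any two admissible choices by a further application of the Ore condition, using the absence of zero divisors to keep the aligning multipliers nonzero) is exactly how those verifications go; as written this is a correct plan rather than a complete proof, but nothing in it would fail when carried out.
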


\noindent On a right Ore domain $D$, the
classical right ring of quotients is similarly constructed. If $D$ is both a left and a
right Ore domain, then (by the uniqueness property) the two rings of quotients are
isomorphic and we speak of the classical ring of quotients of $D$, 
(see \cite{Cohn} for the general theory of ring of quotients in the non-commutative setting).  

In particular, for slice regular functions, we have the following result (see \cite[Theorem 4.5]{singularities}).

\begin{theorem}
  The set of left $*$-quotients
\[\mathcal{L}(B_R)=\{f^{-*}*g \ : f,g \text{ slice regular in } B_R, f\not\equiv 0  \}\]
is a division ring with respect to $+, *$.  Moreover, the ring
$\mathcal{SR}(B_R)$ of slice regular functions on $B_R$ is a left and
right Ore domain and $\mathcal{L}(B_R)$ is its classical ring of
quotients.
\end{theorem}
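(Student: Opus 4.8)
The plan is to reduce the statement to two structural facts about $\mathcal{SR}(B_R)$ — that it has no zero divisors and that it satisfies the left and right Ore conditions — and then to invoke the general theorem on Ore domains stated above, after checking that the concrete set $\mathcal{L}(B_R)$ of left $*$-quotients coincides with the classical left ring of quotients. The organizing principle throughout is that the symmetrization $f^s=f*f^c$ has real coefficients, hence is slice preserving and therefore \emph{central}: by the displayed property of slice preserving functions, $f^s*g=f^s\cdot g=g*f^s$ for every $g\in\mathcal{SR}(B_R)$. This lets me write every left quotient with a central denominator, since $f^{-*}=(f^s)^{-1}f^c$.

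First I would prove that $\mathcal{SR}(B_R)$ is a domain. Given $f\not\equiv 0$, the structure theorem on zero sets together with the Identity Principle shows that $f$ cannot vanish on a whole slice $B_R\cap\mathbb{C}_I$, so the zeros of $f^s$ on each slice are isolated and $f^s\not\equiv 0$. Now if $f*g\equiv 0$ with $f\not\equiv 0$, multiplying on the left by $f^c$ gives $f^s*g\equiv 0$; since $f^s$ is slice preserving this reads $f^s(q)\,g(q)=0$ for all $q$, and as the zero set of $f^s\not\equiv 0$ is nowhere dense, continuity forces $g\equiv 0$. In particular $f\not\equiv 0$ implies $f^c\not\equiv 0$ and hence $f^s\not\equiv 0$, a fact used repeatedly below.

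Next I would show that $\mathcal{L}(B_R)$ is closed under $+$ and $*$ and that each nonzero element is invertible, always reducing to central denominators. For sums, the common central denominator $f^s h^s=(f*h)^s$ reduces $f^{-*}*g+h^{-*}*k$ to the form $D^{-1}N$ with $D=(f*h)^s$ slice preserving and $N\in\mathcal{SR}(B_R)$, and since $D^{-1}=D^{-*}$ this lies in $\mathcal{L}(B_R)$. For products, the key move $g*h^{-*}=(h^s)^{-1}(g*h^c)$ — legitimate because $h^s$ is central — lets me slide the central denominators to the front and collapse $(f^{-*}*g)*(h^{-*}*k)$ to $D^{-1}N$ with $D=(f*h)^s$ and $N=f^c*g*h^c*k$. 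Finally, from the identity $f*f^{-*}=(f^s)^{-1}(f*f^c)=1$ one checks that $N^{-*}*D$ inverts any nonzero $D^{-1}N$, so $\mathcal{L}(B_R)$ is a division ring.

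It remains to see that $\mathcal{SR}(B_R)$ is a left and right Ore domain and that $\mathcal{L}(B_R)$ is its classical ring of quotients. For the right Ore condition I would exhibit an explicit common right multiple of $a,b\not\equiv 0$: since $a*a^c=a^s$ and $b*b^c=b^s$ are central, $a*(a^c*b^s)=a^s*b^s=b*(b^c*a^s)$, and $a^s*b^s\not\equiv 0$ because $\mathcal{SR}(B_R)$ is a domain; the left condition follows symmetrically using $a^c*a=a^s$. With both Ore conditions in hand, the preceding theorem produces the classical left ring of quotients $\{a^{-*}*b\}$, which is exactly $\mathcal{L}(B_R)$, while its uniqueness identifies it as a skew field with the ring just built, and the two-sided Ore property then yields the classical ring of quotients. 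I expect the genuine technical input to be the no–zero–divisor property together with the nonvanishing and centrality of $f^s$: once these are secured, the centrality of symmetrizations does the heavy lifting and the remaining manipulations are purely formal.
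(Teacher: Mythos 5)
The paper does not actually prove this statement: it is quoted, with a citation to \cite[Theorem 4.5]{singularities}, so there is no internal proof to compare against. Your argument is essentially the standard one from that reference, and it is correct in outline: everything is carried by the centrality of the symmetrization $f^s$ (real power-series coefficients, hence slice preserving), which gives the absence of zero divisors, the reduction of every left quotient to one with central denominator, the formula $N^{-*}*D$ for inverses, and the explicit common multiples $a*(a^c*b^s)=b*(b^c*a^s)=a^s*b^s$ and $(b^s*a^c)*a=(a^s*b^c)*b=a^s*b^s$ that verify the right and left Ore conditions in one line each. The one step you should tighten is the claim that $f\not\equiv 0$ implies $f^s\not\equiv 0$: as written (``the zeros of $f^s$ on each slice are isolated'') it presupposes the conclusion, since isolatedness of the zeros of $f^s$ already assumes $f^s$ is not identically zero. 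The clean argument is coefficientwise: if $n_0$ is the least index with $a_{n_0}\neq 0$, then the coefficient of $q^{2n_0}$ in $f^s$ is $\sum_{k}a_k\overline{a_{2n_0-k}}=|a_{n_0}|^2\neq 0$. With that repaired, and granting the routine identification of the formal quotients produced by the abstract Ore theorem with the actual semi-regular functions $f^{-*}*g=(f^s)^{-1}(f^c*g)$, your proof is complete and matches the approach of the cited source.
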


\noindent All functions in $\mathcal{L}(B_R)$ are semi--regular functions on $B_R$. \\ 
The same construction applies to the ring  $(\mathbb{H}[q], +, *)$, 
leading to the definition of the skew-field of quotients of
slice regular polynomials.

{
}

\subsection{Several variable case}
When considering functions of several quaternionic variables, the
definition of slice regularity is more complicated and relies on
the notion of {\em stem functions}.
{
The formulation of this theory (in
the more general setting of real alternative $*-$algebras)
can be found in \cite{severalvariables}. For our purposes, we will only deal with slice regular polynomials and convergent slice regular power series
of two 
quaternionic variables. All the notions and properties discussed in this section can be easily adapted to the several variables case.}
\begin{definition}\label{poly}
	A slice regular polynomial $P: \mathbb{H}^2\to \mathbb{H}$ is any function of the form 
	\[P(q_1,q_2)=\sum_{ \substack{n=0,\ldots,N  \\ m=0,\ldots,M} }{q_1}^n{q_2}^ma_{n,m} \]
	with $a_{n,m}\in\mathbb{H}$. We set $\deg_{q_1}P:=N$ and $\deg_{q_2}P:=M$.\\
 More generally, for $R_1,R_2>0$, a power series of the form
	\[\sum_{(n,m)\in \mathbb{N}^2}q_1^nq_2^ma_{n,m},\] converging in $B_{R_1}\times B_{R_2}=:\Omega_{R_1,R_2}$, defines a slice regular function $f(q_1,q_2)$ on $\Omega_{R_1,R_2}$. 
\end{definition}

{For these classes of slice regular functions it is possible to define in a natural way {\em partial Cullen derivatives} (see  \cite{severalvariables, PV}). 
\begin{definition}
Let $f(q_1,q_2)=\sum_{(n,m)\in \mathbb{N}^2}q_1^nq_2^ma_{n,m}$ be a slice regular function on $\Omega_{R_1,R_2}$. Then 
the partial Cullen derivative of $f$ with respect to $q_1$ is 
	\[f_{q_1}(q_1,q_2)=\sum_{n\ge 1,m \ge 0}n q_1^{n-1}q_2^ma_{n,m},\]
the partial Cullen derivative of $f$ with respect to $q_2$ is 
\[f_{q_2}(q_1,q_2)=\sum_{n\ge 0,m \ge 1}m q_1^{n}q_2^{m-1}a_{n,m}.\]
\end{definition}

}

Slice regular functions of several variables can be endowed with an
appropriate notion of product, the so called {\em slice product}. It
will be denoted by the same symbol $*$ used in the one-variable
case. We refer to \cite{severalvariables} for a precise definition,
let us recall here how it works for
convergent power series.
\begin{definition}
	If $f(q_1,q_2)=\sum\limits_{n,m\in\mathbb N} q_1^nq_2^m a_{n,m}$ and
        $g(q)=\sum\limits_{n,m\in\mathbb N} q_1^nq_2^m b_{n,m}$ are two
        slice regular functions in  
        $\Omega_{R_1,R_2}$,
        then the {\em $*$-product} of $f$ and $g$ is the slice regular function in $\Omega_{R_1,R_2}$, defined by
$$
f*g (q_1,q_2):=\sum_{n,m\in\mathbb N}q_1^nq_2^m\sum_{ \substack{r=0,\ldots,n  \\ s=0,\ldots,m} }a_{r,s} b_{n-r,m-s}
$$
\end{definition}
\noindent For example, if $a,b \in \mathbb{H}$, then
\begin{itemize}
	\item $q_1*q_2=q_2*q_1=q_1q_2$;
	\item $a*(q_1q_2)=(q_1q_2)*a=q_1q_2a$;
	\item $(q_1^nq_2^ma)*(q_1^rq_2^sb)=q_1^{n+r}q_2^{m+s}ab$.
\end{itemize} 	
\noindent Moreover we point out that, if $f$ or $g$ have real coefficients, then $f*g=g*f$.

{\begin{remark}
As proven in \cite{severalvariables}, partial Cullen derivatives satisfy the Leibniz rule with respect to the $*$-product.	
\end{remark}
}

\medskip 

\noindent In the sequel we will denote by $\mathbb{H}[q_1,q_2]$ the set of slice regular polynomials in $2$ quaternionic variables. Since the $*$-product is associative but not commutative, $(\mathbb{H}[q_1,q_2], +, *)$ is
a non-commutative ring (without zero divisors). 

{
	\begin{remark}\label{valutazione}
		Point-wise  evaluation of slice regular functions
		is not in general a multiplicative ring homomorphism.
		Therefore, the zeros of the $*$ product of two slice regular polynomials is not necessarily given by the union
		of the zero sets of each of the factors.
		For instance, $q_1-i$ vanishes on $\{i\}\times \mathbb{H}$,
		while $(q_1-i)*(q_2-j)=q_1q_2-q_1j-q_2i+k$, when $q_1=i$, vanishes only for  $q_2\in\mathbb{C}_i$.
		
	\end{remark}
}



The ring $\mathbb{H}[q_1,q_2]$
of slice regular polynomials in two quaternionic variables 
can be also regarded as the ``regular extension'' of the ring $\mathbb{H}[q_2]$ adding the variable $q_1$. In fact
	let $P\in \mathbb{H}[q_1,q_2]$ be a slice regular polynomial in two quaternionic variables,
then $P$ can be written as a polynomial in the variable $q_1$ with coefficients in $\mathbb{H}[q_2]$:
	
	\begin{equation}\label{coefficienti1}
		P(q_1,q_2)=\sum_{n=0}^{\deg_{q_1} P} q_1^n *P_n(q_2)=\sum_{n=0}^{\deg_{q_1} P} q_1^n P_n(q_2)
	\end{equation}
		where each $P_n$ is a regular polynomial in the
        quaternionic variable $q_2$.  We will use the notation 
        $[\mathbb{H}[q_2]]_{\mathcal{SR}}[q_1]$ instead of $\mathbb H[q_1,q_2]$ when needed. 
        Moreover, if one considers
        another slice regular polynomial
        $Q(q_1,q_2)=\sum\limits_{s=0}^{\deg_{q_1} Q} q_1^s Q_s(q_2)$
        in $[\mathbb{H}[q_2]]_{\mathcal{SR}}[q_1]$, then the $*$-product of $P$ and
        $Q$ is given by
	\[P*Q (q_1,q_2) =\sum_{k=0}^{\deg_{q_1}P+\deg_{q_1}Q} q_1^k C_k(q_2)\]
	where the coefficients are $C_k=\sum\limits_{n+s=k} P_n * Q_s$.

Exchanging the role of the two variables, one gets something slightly different. 
In fact, every slice regular polynomial $P\in \mathbb{H}[q_1,q_2]$ can be also written as
\begin{equation}\label{coefficienti3}
P(q_1,q_2)=\sum_{m=0}^{\deg_{q_2} P} q_2^m *\tilde{P}_m(q_1)
\end{equation}
where $\tilde P_m$ is a slice regular polynomial in the quaternionic
variable $q_1$, for any $m$.
We point out that in \eqref{coefficienti3}
there is no immediate analogue formulation in terms of pointwise product as in \eqref{coefficienti1}.
We will denote by
$[\mathbb{H}[q_1]]_{\mathcal{SR}}[q_2]$ the set of slice regular polynomials
in the variable $q_2$ with coefficients in $\mathbb{H}[q_1]$, taking
into account that the multiplication involved is the $*$-product.
If $Q$ is another slice regular polynomial in $[\mathbb{H}[q_1]]_{\mathcal{SR}}[q_2]$,
\[Q(q_1,q_2)=\sum_{r=0}^{\deg_{q_2} Q} q_2^r *\tilde Q_r(q_1),\]
 then the $*$-product of $P$ and $Q$ can also be expressed as
	\[P*Q (q_1,q_2) =\sum_{k=0}^{\deg_{q_2}P+\deg_{q_2}Q} q_2^k* \tilde C_k(q_1)\]
where the coefficients are $\tilde C_k=\sum\limits_{n+s=k} \tilde P_n * \tilde Q_s$.


	\begin{remark}
	The same can be done in $n$ variables: let $P$ be a slice regular polynomial in $n$ variables, namely 
\[P(q_1, \ldots, q_n)=\sum_{(k_1,\ldots, k_n)\in K}
q_1^{k_1}q_2^{k_2}\cdots q_n^{k_n} a_{k_1,\ldots, k_n}\]
where $K$ is a finite subset of $\mathbb{N}^n$. If $q_l$ is one of the variables of $P$, then
one can write
\[P(q_1,\ldots,q_n)=\sum_{ k=0 }^{\deg_{q_l}P} q_l^{k}*\tilde P_k(q_1, \ldots,q_{l-1},q_{l+1}, \ldots, q_n),\]
where $\tilde P_k(q_1, \ldots,q_{l-1},q_{l+1}, \ldots, q_n)$ is a slice regular
is a slice regular polynomial in the remaining variables.

\noindent Hence the set of slice regular polynomials $P$ in $n$ variables $\mathbb{H}[q_1, \ldots,q_{n}]$ 
can be regarded as the {``regular extension''}
$[\mathbb{H}[q_1, \ldots,q_{l-1},q_{l+1}, \ldots,
    q_n]]_{\mathcal{SR}}[q_l]$,
for any $l=1,\ldots, n$.

\end{remark}

	As in the one-variable case, it is possible to introduce two
        operators on slice regular functions (see \cite{severalvariables}).
\begin{definition}\label{R-coniugata2}
	Let $f(q_1,q_2)=\sum\limits_{n,m \in \mathbb{N}}q_1^nq_2^ma_{n,m}$ be
        a slice regular function on the domain $\Omega_{R_1,R_2}$. Then the \emph{regular conjugate} of $f$ is the
        slice regular function defined on $\Omega_{R_1,R_2}$ by
	\[f^c(q_1,q_2)=\sum_{n,m \in \mathbb{N}}q_1^nq_2^m\overline{a_{n,m}},\]
	and the
	\emph{symmetrization} of $f$ is the slice regular function defined on $\Omega_{R_1,R_2}$ by 
	\[f^s= f*f^c = f^c*f.\]
\end{definition}
\noindent The behavior of these operators with respect to the $*$-product is the same as the one they have in the one-variable case. 
\begin{remark}\label{sym} 
By direct computation, we have that the coefficients of the power series expansion of $f^s$ are real numbers and hence 
$f^s*g=g*f^s$ for any slice regular function $g$ (where defined).
Moreover, for any $f,g$ slice regular functions, we also have that $(f*g)^c=g^c*f^c$ and hence that $(f*g)^s=f^s*g^s$ (where defined).
\end{remark}
\begin{remark}\label{ZD}
Notice that, since $(\mathbb{H}[q_1, q_2],+,*)$ does not contain zero divisors, if $P$ is a slice regular polynomial not identically zero, then its symmetrization $P^s\not \equiv 0.$ 
\end{remark}

 The next proposition (see \cite{GSV-LAA} {for its formulation in $n$-variables}) gives a first geometrical  description of the zero set of  slice regular polynomials in  two quaternionic variables.
 
 \noindent For any $a \in \mathbb H$, let us denote by 
 \[C_a=\{q \in \mathbb{H} \ : \ qa=aq\}.\]

	\begin{proposition}\label{mlineare}
	Let $P\in \mathbb{H}[q_1, q_2]$
	be a slice regular polynomial in two variables and let $a \in \mathbb H$. Then $P$ vanishes on $\{a\}\times C_a$
	if and only if there exists $P_1\in\mathbb{H}[q_1, q_2]$ such that 
	\[P(q_1,q_2)=(q_1-a)*P_1(q_1, q_2).\]
	The slice regular polynomial $P$ vanishes on $\mathbb H \times \{a\}$ if and only if there exists $P_2\in\mathbb{H}[q_1, q_2]$ such that 
	\[P(q_1,q_2)=(q_2-a)*P_2(q_1, q_2).\]
\end{proposition}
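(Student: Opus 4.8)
The plan is to first record the single structural fact that makes everything transparent: by the displayed multiplication rules, both $q_1$ and $q_2$ are \emph{central} for the $*$-product, so that $\mathbb H[q_1,q_2]$ is nothing but the ring of $\mathbb H$-coefficient polynomials in two commuting central indeterminates, with $(q_1^nq_2^m a)*(q_1^rq_2^s b)=q_1^{n+r}q_2^{m+s}ab$ and left multiplication by a constant $a$ acting coefficient-wise. The only reason pointwise evaluation fails to be a ring homomorphism is that one substitutes non-central quaternions for the central variables. Since $(q_1-a)$ is monic in $q_1$ with central leading coefficient $1$ (and likewise $(q_2-a)$ in $q_2$), left Euclidean division by these factors is available exactly as in the commutative case.

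For the two \emph{if} directions I would compute coefficients directly. Writing $P=(q_1-a)*P_1$ with $P_1=\sum q_1^nq_2^m c_{n,m}$, the rules above give that the coefficient of $q_1^kq_2^m$ in $P$ is $c_{k-1,m}-a\,c_{k,m}$. Evaluating at $q_1=a$, $q_2=b$ and reindexing yields the telescoping expression $\sum_{k,m}\big(a^{k+1}b^m-a^k b^m a\big)c_{k,m}$, whose terms cancel exactly when $b^m a=ab^m$, i.e. when $ab=ba$; this is precisely the condition $b\in C_a$. The computation for $(q_2-a)*P_2$ is the same, but now the relevant telescoping expression is $\sum_{n,k}\big(c^n a^{k+1}-c^n a^{k}a\big)d_{n,k}$, in which the two families coincide for \emph{every} value $c$ of $q_1$ because $a^k a=a^{k+1}$ requires no commutation. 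This explains structurally why the first factorization only forces vanishing on $\{a\}\times C_a$ while the second forces it on all of $\mathbb H\times\{a\}$.

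For the \emph{only if} directions I would divide $P$ on the left by $(q_1-a)$ in the variable $q_1$: subtracting suitable left multiples $(q_1-a)*(q_1^{\,k}R_k(q_2))$ cancels the top $q_1$-degree term at each step and terminates, producing $P=(q_1-a)*Q+R$ with remainder $R=R(q_2)\in\mathbb H[q_2]$ of $q_1$-degree $0$ (and symmetrically $P=(q_2-a)*Q+R$ with $R=R(q_1)\in\mathbb H[q_1]$ for the second statement). Although evaluation is not multiplicative, it is \emph{additive}; applying the already-established \emph{if} direction to $(q_1-a)*Q$ and using the hypothesis that $P$ vanishes on $\{a\}\times C_a$, I would conclude that $R(b)=0$ for every $b\in C_a$.

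The step I expect to be the main obstacle is the final upgrade from ``$R$ vanishes on $C_a$'' to ``$R\equiv0$'', which is exactly where the geometry of the centralizer enters. For non-real $a$ one has $C_a=\mathbb C_{I_a}$, a full complex slice, so the one-variable slice regular polynomial $R\in\mathbb H[q_2]$ vanishes on an entire slice and the Identity Principle (Theorem \ref{Id}) forces $R\equiv0$; for real $a$ one has $C_a=\mathbb H$ and $R$ already vanishes everywhere. Either way $P=(q_1-a)*Q$, so $P_1=Q$ works. The second statement is easier here, since the remainder $R=R(q_1)$ is directly shown to vanish on all of $\mathbb H$ and no appeal to the Identity Principle is needed. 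Throughout, the delicate point is to track on which side constants are multiplied, as this bookkeeping is what produces the asymmetry between the two variables.
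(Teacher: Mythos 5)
Your proof is correct and self-contained; note that the paper itself does not prove Proposition \ref{mlineare} but imports it from \cite{GSV-LAA}, so there is no internal proof to compare against. Your argument — left Euclidean division by the monic central-leading-coefficient factor, additivity of evaluation, the coefficient computation showing why $(q_1-a)*P_1$ vanishes only where $b^m a=ab^m$ while $(q_2-a)*P_2$ vanishes for all $q_1$, and the final upgrade from vanishing on $C_a=\mathbb{C}_{I_a}$ to $R\equiv 0$ via the Identity Principle (Theorem \ref{Id}) — is exactly the standard route for such Bezout-type statements in the slice regular setting, and each step checks out (the only quibble is that the cancellation $a^{k+1}b^m=a^kb^ma$ is term-by-term rather than telescoping).
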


Moreover (see again \cite{GSV-LAA}) we will make use of the following

        \begin{proposition}\label{zeri*}
  If a slice
        regular polynomial $P$ vanishes at $(a,b)$ with
        $ab=ba$, then $P*Q$ still
        vanishes at $(a,b)$ for any $Q$ in $\mathbb
        H[q_1,q_2]$. 

          \end{proposition}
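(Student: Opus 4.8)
The plan is to reduce the two-variable claim to the known one-variable behaviour of the $*$-product recorded earlier in the excerpt, namely the zero relation
$$
f*g(q)=\begin{cases} 0 & \text{if } f(q)=0,\\ f(q)\cdot g\bigl(f(q)^{-1} q f(q)\bigr) & \text{if } f(q)\neq 0,\end{cases}
$$
together with Remark \ref{valutazione} on pointwise evaluation. First I would fix the point $(a,b)$ with $ab=ba$ and suppose $P(a,b)=0$. Writing $P$ and $Q$ as power series in $q_1,q_2$ and using the two-variable $*$-product formula, I would like to say that evaluating $P*Q$ at $(a,b)$ factors through the value of $P$ at $(a,b)$. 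The difficulty is that there is no naive identity $(P*Q)(a,b)=P(a,b)\cdot Q(a,b)$, so I cannot simply substitute.

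The key idea I would pursue is to exploit the hypothesis $ab=ba$ to restrict attention to a single commutative slice. Since $a$ and $b$ commute, both lie in a common copy $\mathbb{C}_I$ of the complex numbers for a suitable $I\in\mathbb{S}$ (indeed $C_a$ contains $b$, and commuting quaternions share a slice). On that slice $\mathbb{C}_I$ the variables $q_1,q_2$ evaluated at $a,b$ commute with each other, so the one-variable machinery applies twice: I would regard $P(q_1,q_2)=\sum_n q_1^n * P_n(q_2)$ as an element of $[\mathbb{H}[q_2]]_{\mathcal{SR}}[q_1]$ via \eqref{coefficienti1}, and analyse the $q_1$-variable first. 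The plan is to show that evaluation at the commuting pair $(a,b)$ behaves like evaluation of a one-variable $*$-product: if $P(a,b)\neq 0$, then
$$
(P*Q)(a,b)=P(a,b)\cdot Q\bigl(P(a,b)^{-1} a\, P(a,b),\ P(a,b)^{-1} b\, P(a,b)\bigr),
$$
an analogue of the one-variable formula, while if $P(a,b)=0$ the whole product vanishes at $(a,b)$.

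The main obstacle will be justifying that conjugation-by-$P(a,b)$ identity in two variables and checking that it really does force the value to be zero when $P(a,b)=0$. Concretely, I would prove the following substitution principle: for commuting $a,b$, the map sending $q_1\mapsto a$, $q_2\mapsto b$ on the slice $\mathbb{C}_I$ is, after the appropriate conjugation twist, a ring homomorphism into $\mathbb{H}$, so that the $*$-product passes to the ordinary product of the twisted evaluations. Granting this, the case $P(a,b)=0$ makes the leading factor vanish and hence kills $(P*Q)(a,b)$ regardless of $Q$. I would verify the substitution principle monomial by monomial using the third bullet identity $(q_1^nq_2^m a)*(q_1^r q_2^s b)=q_1^{n+r}q_2^{m+s}ab$, reducing the general case to linearity and the known one-variable evaluation law, and finally invoke the Identity Principle (Theorem \ref{Id}) only if I need to extend a slice-wise computation to all of $\mathbb{H}^2$. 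The delicate point throughout is keeping track of the conjugation argument $P(a,b)^{-1}(\cdot)P(a,b)$ and confirming, via $ab=ba$, that it preserves the commuting relation so that the inner evaluation of $Q$ is well defined.
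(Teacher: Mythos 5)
The paper does not actually prove this proposition; it is quoted from \cite{GSV-LAA}. Your plan is nevertheless essentially the right one, and the key step you leave as "the delicate point" in fact goes through cleanly, so let me record it: writing $P=\sum_{n,m}q_1^nq_2^ma_{n,m}$ and $Q=\sum_{r,s}q_1^rq_2^sb_{r,s}$, the monomial rule $(q_1^nq_2^ma_{n,m})*(q_1^rq_2^sb_{r,s})=q_1^{n+r}q_2^{m+s}a_{n,m}b_{r,s}$ gives, upon evaluating at $(a,b)$ and using $ab=ba$ to rewrite $a^{n+r}b^{m+s}=a^rb^s\,a^nb^m$,
\[
(P*Q)(a,b)=\sum_{r,s}a^rb^s\Bigl(\sum_{n,m}a^nb^ma_{n,m}\Bigr)b_{r,s}=\sum_{r,s}a^rb^s\,P(a,b)\,b_{r,s},
\]
which vanishes termwise when $P(a,b)=0$. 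This is exactly the two-variable analogue of the one-variable identity $f*g(q)=\sum_j q^jf(q)b_j$ that underlies the evaluation formula you quote, and it is all that is needed. Two remarks on your write-up. First, the detour through a common slice $\mathbb{C}_I$ and the Identity Principle is unnecessary: the only use of the hypothesis is the commutation $a^{n+r}b^{m+s}=a^rb^s a^nb^m$, which is a pointwise algebraic fact, not a slice-theoretic one. Second, your claim that the twisted evaluation is ``a ring homomorphism into $\mathbb{H}$'' is imprecise as stated, since the conjugation twist $x\mapsto P(a,b)^{-1}xP(a,b)$ depends on the left factor $P$; the correct statement is the displayed identity above, from which the conjugation formula for $P(a,b)\neq 0$ follows but is not needed for the proposition. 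With these adjustments your argument is complete and correct.
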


	\section{Preliminaries on non commutative  determinants}\label{sceD}

Consider a skew field $(\mathbb{F}, +,\star)$, where the product operation $\star$ is  not necessarily  commutative.
	Let $\overline{\mathbb{F}}^{\star}$
	be the maximal abelian quotient of the multiplicative group  $(\mathbb{F},\star)$, i.e. the quotient of  $(\mathbb{F},\star)$
	by its commutator subgroup. In general we will use the notation
        $[x]_{\overline{\mathbb{F}}^{\star}}$ to consider the image in
        $\overline{\mathbb{F}}^{\star}$ of $x\in\mathbb{F}$.  If
        $[x]_{\overline{\mathbb{F}}^{\star}}$ and
        $[y]_{\overline{\mathbb{F}}^{\star}}$ are given, then, using the same symbol $\star$ to denote the multiplication operator in $\overline{\mathbb{F}}^{\star}$, we will write
        $[x]_{\overline{\mathbb{F}}^{\star}}\star
        [y]_{\overline{\mathbb{F}}^{\star}}:= [x\star
          y]_{\overline{\mathbb{F}}^{\star}}$.  Notice that, by
        definition, $[x]_{\overline{\mathbb{F}}^{\star}}\star
        [y]_{\overline{\mathbb{F}}^{\star}}=
        [y]_{\overline{\mathbb{F}}^{\star}}\star
        [x]_{\overline{\mathbb{F}}^{\star}}$.
	
	The multiplicative group of invertible $n\times n$ matrices with entries in $\mathbb{F}$, will be denoted
	by $GL(n, \mathbb{F})$,
where, if $A=(a_{ij})_{1\leq
			i,j\leq n}$ and $B=(b_{ij})_{1\leq i,j\leq n}$, with $a_{ij}\in\mathbb{F}$ and $b_{ij}\in\mathbb{F}$ for $1\leq i,j\leq n$,
		then $A\star_{GL(n,\mathbb{F})} B:=(\sum\limits_{k=0}^n a_{ik}\star
		b_{kj})_{1\leq i,j\leq n}$.
		For the sake of shortness, we will simply use the symbol $\star$ for $\star_{GL(n,\mathbb{F})}$.
		The neutral element for $\star$ in $GL(n, \mathbb{F})$ 
		is $Id_n=(\delta_{ij})_{1\leq i,j\leq n}$, where $\delta_{ij}$ is the Kronecker symbol.
		Clearly $A\star B\neq B\star A$ in general.

	We recall the following (see, e.g., \cite{Dieudonne, Ash, Cas})
	
	\begin{proposition}\label{Pdet}
		There exists a unique homomorphism
		\[ { \rm Det}^{\star}_n: GL(n,\mathbb{F})\to \overline{\mathbb{F}}^{\star}\]
		such that
		\[
		{\rm Det}^{\star}_n\begin{pmatrix}
		\lambda_1& 0&\ldots&\dots&0\\
		0&\lambda_2&0&\ldots&0 \\
		\vdots& \vdots &\ddots&0&\vdots\\
		\vdots& \vdots &\vdots&\ddots&0\\

		0&\ldots&0&0&\lambda_n
		\end{pmatrix}=[\lambda_1\star \lambda_2\star \cdots \star \lambda_n]_{\overline{\mathbb{F}}^{\star}}.
		\]
			\end{proposition}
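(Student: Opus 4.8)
The plan is to construct the homomorphism $\mathrm{Det}^\star_n$ by reducing a matrix to diagonal (or triangular) form via elementary row operations, in the spirit of the classical Dieudonn\'e determinant construction. First I would establish that every matrix $A \in GL(n,\mathbb{F})$ can be written as a product of elementary matrices times a diagonal matrix. Over a skew field one has a Gaussian-elimination procedure: since $A$ is invertible, some entry in the first column is nonzero, so after multiplying on the left by a permutation-type elementary matrix I may assume $a_{11} \neq 0$; then left-multiplication by the transvections $E_{i1}(-a_{i1}\star a_{11}^{-1})$ clears the rest of the first column. Iterating down the diagonal reduces $A$ to upper-triangular form, and a further round of transvections clears the strictly-upper part, leaving a diagonal matrix $\mathrm{diag}(\lambda_1,\dots,\lambda_n)$ with all $\lambda_i \neq 0$. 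The normalization axiom then forces the value on this diagonal to be $[\lambda_1 \star \cdots \star \lambda_n]_{\overline{\mathbb{F}}^\star}$, which both proves uniqueness and tells us what the value on $A$ must be.

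For \textbf{existence}, I would define $\mathrm{Det}^\star_n(A)$ to be the class $[\lambda_1\star\cdots\star\lambda_n]_{\overline{\mathbb{F}}^\star}$ obtained from any such reduction, and then verify this is well-defined and multiplicative. The cleanest route is to use the standard fact that $GL(n,\mathbb{F})$ is generated by the transvections (elementary matrices $E_{ij}(c)$, $i\neq j$) together with the diagonal matrices, and that over a skew field the subgroup generated by transvections is exactly the commutator subgroup $[GL(n,\mathbb{F}),GL(n,\mathbb{F})]$ (for $n\geq 2$; the Whitehead lemma / Dieudonn\'e's theorem). Granting this, one defines a map on generators by sending every transvection to the identity of $\overline{\mathbb{F}}^\star$ and each diagonal $\mathrm{diag}(\lambda_1,\dots,\lambda_n)$ to $[\lambda_1\star\cdots\star\lambda_n]_{\overline{\mathbb{F}}^\star}$; the target being \emph{abelian} is precisely what makes this descend to a homomorphism, since all the commutator relations among the generators are automatically killed. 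This is where the passage to the abelianization $\overline{\mathbb{F}}^\star$ is essential.

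Concretely, to see the map respects the diagonal-rescaling relations I would record the key commutation identity
\[
\mathrm{diag}(\dots,\lambda_i,\dots)\star E_{ij}(c)\star\mathrm{diag}(\dots,\lambda_i,\dots)^{-1}=E_{ij}(\lambda_i\star c\star \lambda_j^{-1}),
\]
which shows conjugating a transvection by a diagonal matrix yields another transvection, so transvections stay in the kernel. The only subtlety is that swapping two diagonal entries or reordering the product $\lambda_1\star\cdots\star\lambda_n$ changes the bracketed element only by a commutator, hence not at all after projecting to $\overline{\mathbb{F}}^\star$; this is exactly why $[\lambda_1\star\cdots\star\lambda_n]_{\overline{\mathbb{F}}^\star}=[\lambda_{\sigma(1)}\star\cdots\star\lambda_{\sigma(n)}]_{\overline{\mathbb{F}}^\star}$ for any permutation $\sigma$, and why the value is independent of the elimination choices. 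Multiplicativity $\mathrm{Det}^\star_n(A\star B)=\mathrm{Det}^\star_n(A)\star\mathrm{Det}^\star_n(B)$ then follows because the map is well-defined on the quotient $GL(n,\mathbb{F})/[GL,GL]$, which is an abelian group canonically isomorphic to $\overline{\mathbb{F}}^\star$.

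The main obstacle I anticipate is the Whitehead-type lemma identifying the commutator subgroup of $GL(n,\mathbb{F})$ with the transvection subgroup $E(n,\mathbb{F})$ over a \emph{non-commutative} field; the standard commutator manipulations $[E_{ij}(a),E_{jk}(b)]=E_{ik}(a\star b)$ must be carried out carefully keeping track of the order of products, and the determinant of a diagonal matrix genuinely lands in $\overline{\mathbb{F}}^\star$ rather than $\mathbb{F}^\star$ precisely because $[\lambda,\mu]$ need not vanish. Since the paper only needs the existence and uniqueness statement and cites \cite{Dieudonne,Ash,Cas}, I expect the author to invoke this classical structure theory rather than reprove it, reducing the proof to the verification that the normalization axiom pins down the homomorphism uniquely and that the abelianization target makes the construction consistent.
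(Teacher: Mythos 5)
Your sketch is a sound outline of the classical Dieudonn\'e--Artin argument, but it is not the route the paper takes: in fact the paper gives no proof of Proposition \ref{Pdet} at all, recalling it from \cite{Dieudonne, Ash, Cas}, and then realizes the determinant by a different, recursive device --- choose a nonzero pivot $a_{k1}$ in the first column, clear the rest of that column by row operations (property (b)), delete the first column and the $k$-th row, and set ${\rm Det}^{\star}_n(A):=[a_{k1}]_{\overline{\mathbb{F}}^{\star}}\star{\rm Det}^{\star}_{n-1}(B_{k1})$, with independence of the choice of pivot deferred to \cite{Cas}. Your approach (full reduction to a diagonal matrix, plus the identification of the transvection subgroup $E(n,\mathbb{F})$ with $[GL(n,\mathbb{F}),GL(n,\mathbb{F})]$ for $n\ge 2$ over a division ring) buys a transparent uniqueness proof --- the normalization on diagonals together with triviality on transvections pins the homomorphism down on a generating set --- and explains conceptually why the target must be the abelianization; the paper's recursion buys an explicit computable formula, which is what it actually uses later (the $2\times 2$ expressions in the resultant computations). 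One caveat on your existence step: assigning values to generators and observing that the abelian target ``kills commutator relations'' does not by itself yield a well-defined homomorphism, since $GL(n,\mathbb{F})$ is not free on transvections and diagonals; you must still verify that the value $[\lambda_1\star\cdots\star\lambda_n]_{\overline{\mathbb{F}}^{\star}}$ is independent of all elimination choices (exactly the well-posedness the paper delegates to \cite{Cas}), or equivalently establish $GL(n,\mathbb{F})/E(n,\mathbb{F})\cong\overline{\mathbb{F}}^{\star}$ directly. You correctly flag this as the main obstacle, but it is the substantive content of the theorem rather than a side verification; since the paper itself outsources the same point, this is a difference in emphasis, not an error.
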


\noindent In the commutative setting ${\rm Det}^{\star}_n$ coincides with the usual determinant. In the noncommutative case, it defines a Dieudonn\'e type determinant, whose principal properties are:
	\begin{itemize}
	\item [(a)]${\rm Det}^{\star}_n(Id_n)=[1]_{\overline{\mathbb{F}}^{\star}}$ for any $n\geq 2$;
	\item [(b)] if $\mathbf{r}$ and $\mathbf{s}$ are distinct rows of $A\in GL(n,\mathbb{F})$
	and if $\tilde A$ is the matrix obtained from $A$ by replacing the row $\mathbf{r}$ with the row\footnote{With
		the following convention : if $\mathbf{s}=(a_{s1}\ a_{s2}\ \ldots \ a_{sn})$ and $\mathbf{r}=(a_{r1}\ a_{r2}\ \ldots \ a_{rn})$, then
		$\mathbf{r}+\lambda\star\mathbf{s}:=(a_{r1}+\lambda\star a_{s1}\ a_{r2}+\lambda\star a_{s2}\ \ldots \ a_{rn}+\lambda\star a_{sn})$.}
	$\mathbf{r}+\lambda\star \mathbf{s}$,
	then   ${\rm Det}^{\star}_n(\tilde A)={\rm Det}^{\star}_n(A)$;
	\item[(c)] if $\mathbf{r}$ is a row of $A\in GL(n,\mathbb{F})$
	and if $\hat A$ is the matrix obtained from $A$ by replacing the row $\mathbf{r}$ with the row
	$\lambda\star \mathbf{r}$,
	then   ${\rm Det}^{\star}_n(\hat A)=[\lambda]_{\overline{\mathbb{F}}^{\star}}  \star {\rm Det}^{\star}_n(A)$.
	\end{itemize}
        
\noindent As customary, one then extends the definition of ${\rm Det}^{\star}_n$ also to non-invertible square matrices 
and put  ${\rm Det}^{\star}_n(A)=[0]_{\overline{\mathbb{F}}^{\star}}$ if $A$ is a non-invertible square matrix, where $0\in\mathbb{F}$ is the neutral element for $+$.
	
	
	

\noindent The homomorphism ${\rm Det}^{\star}_n$ can be defined recursively, by means of properties (a), (b) and (c).	
{For the ease of the reader, we enclose here the main properties of ${\rm Det}^{\star}_n$, together with some of their proofs adapted to our setting.}

\noindent For $n=2$, if $a\neq 0$ 
	and if $a^{-\star}$ is the inverse of $a$ with respect to $\star$, then
	
	\[
	{\rm Det}^{\star}_2\begin{pmatrix}
	a&b\\
	c&d\\
	\end{pmatrix}:=[a\star d-a\star c\star a^{-\star}\star b]_ {\overline{\mathbb{F}}^{\star}}
	=[ c\star a\star c^{-\star}\star d  -c\star b]_ {\overline{\mathbb{F}}^{\star}};
	\]

        if $a=0$, then
	\[
	{\rm Det}^{\star}_2\begin{pmatrix}
	0&b\\
	c&d\\
	\end{pmatrix}:=[b\star c]_ {\overline{\mathbb{F}}^{\star}}.
	\]
	Clearly ${\rm Det}^{\star}_2 :GL(2,\mathbb{F})\to
        \overline{\mathbb{F}}^{\star}$ satisfies the condition for
        diagonal $2\times 2 $ matrices as in Proposition \ref{Pdet} and hence also the corresponding property (a) holds.
	Moreover, by direct computations, one can easily see that for any $\lambda \in \mathbb{F}$

	\begin{equation}\label{row}
	{\rm Det}^{\star}_2\begin{pmatrix}
	a+\lambda\star c&b+\lambda\star d\\
	c&d\\
	\end{pmatrix}=
        {\rm Det}^{\star}_2\begin{pmatrix}
a&b\\
c+        \lambda\star a&d+\lambda\star b\\
\end{pmatrix}=
         {\rm Det}^{\star}_2 \begin{pmatrix}
	a& b\\
	c&d\\
	\end{pmatrix}
	\end{equation}

and

	\begin{equation}\label{row}
	{\rm Det}^{\star}_2\begin{pmatrix}
	\lambda\star a&\lambda\star b\\
	c&d\\
	\end{pmatrix}=
        {\rm Det}^{\star}_2\begin{pmatrix}
a&b\\
        \lambda\star c&\lambda\star c\\
\end{pmatrix}=
        [\lambda]_{\overline{\mathbb{F}}^{\star}}\star {\rm Det}^{\star}_2 \begin{pmatrix}
	a& b\\
	c&d\\
	\end{pmatrix}
	\end{equation}
which show that properties (b) and (c) hold for ${\rm Det}^{\star}_2$.

        In order to  define ${\rm Det}^{\star}_n$ for $n\geq 2$,
        	one proceeds recursively.  
	Suppose $A$ is an $n\times n$ matrix
	whose entries are in $\mathbb{F}$.
	If all the elements of the first column of $A$ are zero then ${\rm Det}^{\star}_n(A)=0$.
	If not, choose a non--zero element $a_{k1}$ in the first column of $A$ and subtract a
	suitable multiple of the $k$-th row of $A$ from each ot the other rows of $A$
	in such a way that any other element of the first column of $A$ vanishes.
	Call this matrix $B_k$.
	Then consider the $(n-1)\times (n-1)$ matrix $B_{k1}$ obtained from $B_k$ by eliminating the first column and the $k$-th row, and
	define\footnote{
	This is precisely what has been done for ${\rm Det}^{\star}_2$, since, when $a\neq 0$,
	
	\[
	{\rm Det}^{\star}_2\begin{pmatrix}
	a&b\\
	c&d\\
	\end{pmatrix}
	=[a\star d-a\star c\star a^{-\star}\star b]_ {\overline{\mathbb{F}}^{\star}}=
	{\rm Det}^{\star}_2\begin{pmatrix}
	a&b\\
	0&d- c\star a^{-\star}\star b
	\end{pmatrix}.
	\]
	}
	
	\begin{equation}\label{Dn}
	{\rm Det}^{\star}_n(A):=[a_{k1}]_{\overline{\mathbb{F}}^{\star}}\star{\rm Det}^{\star}_{n-1} B_{k1}.
	\end{equation}
        Notice that, from the way it is defined, the properties (a), (b) and (c)
        proved for  ${\rm Det}^{\star}_2$ extends automatically to ${\rm Det}^{\star}_n$.

	
	 In order to prove that the definition of ${\rm Det}^{\star}_n$ in (\ref{Dn}) is well-posed, one has to
	show that it is independent from the choice of the non-zero element in the first column of $A$:
        indeed this follows from the above-mentioned properties of ${\rm Det}^{\star}_n$ (see \cite{Cas} for the details).
        Another important consequence of  Proposition \ref{Pdet}
	(see \cite{bren})
	is a generalization of the Binet formula, namely if
	$A,B\in GL(n,\mathbb{F})$ then
	
	\begin{equation}\label{binet}
	{\rm Det}^{\star}_n(A\star B)={\rm Det}^{\star}_n(A)\star {\rm Det}^{\star}_n(B).
	\end{equation}  
	This fact leads to the following version of the Cramer Formula. 
	\begin{proposition}\label{Cramer}
	Consider a	system of $n$ linear equations in $n$ variables 
		\begin{equation}\label{sys}
			 A\star \mathbf{x}=\mathbf{b}
			 \end{equation}
	where $A=(a_{ij})_{1\leq    i,j\leq n}$ is a $n\times n$ matrix with entries in $\mathbb{ F}$, $\mathbf{x}=(x_1,\ldots,x_n)^t$ is the column of the unknowns and 
	$\mathbf{b}=(b_1,\ldots,b_n)^t\in \mathbb{F}^n$  is the column of constant terms. 
		Then ${\rm Det}^{\star}_n A\neq [0]_{\overline{\mathbb{F}}^{\star}}$ {if and only if} the system has a unique solution in $(\overline{\mathbb{F}}^\star)^n$, given by
	\begin{equation}\label{cramer}
	\begin{aligned}
		[\hat x_j]_{\overline{\mathbb{F}}^{\star}}&=
		{\rm Det}^{\star}_n(A)^{-\star}\star {\rm Det}^{\star}_n (A_1,A_2,\ldots, A_{j-1}, \mathbf{b}, A_{j+1}, \ldots, A_n)\\
&={\rm Det}^{\star}_n (A_1,A_2,\ldots, A_{j-1}, \mathbf{b}, A_{j+1}, \ldots, A_n)\star{\rm Det}^{\star}_n(A)^{-\star},
\end{aligned}
	\end{equation} 
	for $j=1,\ldots,n$, where $A_\ell$ denotes the $\ell$-th column of $A$ and the last equality holds since $\overline{\mathbb{F}}^\star$ is abelian.
	
	\end{proposition}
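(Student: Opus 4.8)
The plan is to keep the logical equivalence separate from the explicit Cramer expression, since only the latter carries real content. The equivalence ${\rm Det}^{\star}_n A\neq[0]_{\overline{\mathbb{F}}^{\star}}$ if and only if \eqref{sys} is uniquely solvable is essentially a restatement of the convention adopted just before \eqref{Dn}, that ${\rm Det}^{\star}_n$ vanishes exactly on non-invertible matrices: indeed ${\rm Det}^{\star}_n A\neq[0]_{\overline{\mathbb{F}}^{\star}}$ means $A\in GL(n,\mathbb{F})$, and then left multiplication by $A$ is a bijection of $\mathbb{F}^n$, so \eqref{sys} has the unique solution $\mathbf{x}=(x_1,\ldots,x_n)^t\in\mathbb{F}^n$, whose image in $(\overline{\mathbb{F}}^{\star})^n$ is thereby uniquely determined; conversely, if $A$ is non-invertible the map $\mathbf{x}\mapsto A\star\mathbf{x}$ fails to be injective and uniqueness of the components breaks down. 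So the heart of the matter is the formula for $[\hat x_j]_{\overline{\mathbb{F}}^{\star}}$.

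To obtain it I would reduce everything to the Binet formula \eqref{binet}. Fix $j$ and introduce the auxiliary matrix $X_j$ obtained from $Id_n$ by replacing its $j$-th column with the solution vector $\mathbf{x}$. A direct computation of the product shows that, for $l\neq j$, the $l$-th column of $A\star X_j$ is $A_l$ (since the corresponding column of $X_j$ is the standard basis vector $e_l$), while its $j$-th column is $A\star\mathbf{x}=\mathbf{b}$; hence
\[A\star X_j=(A_1,\ldots,A_{j-1},\mathbf{b},A_{j+1},\ldots,A_n)=:M_j.\]
Applying \eqref{binet} then gives ${\rm Det}^{\star}_n(M_j)={\rm Det}^{\star}_n(A)\star{\rm Det}^{\star}_n(X_j)$, so the whole problem reduces to evaluating ${\rm Det}^{\star}_n(X_j)$.

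The key step, and the one where non-commutativity must be handled with care, is to prove ${\rm Det}^{\star}_n(X_j)=[x_j]_{\overline{\mathbb{F}}^{\star}}$. When $x_j\neq 0$, the $j$-th row of $X_j$ is $(0,\ldots,0,x_j,0,\ldots,0)$, and for each $i\neq j$ I would subtract $(x_i\star x_j^{-\star})\star(\text{row }j)$ from the $i$-th row: by property (b) this leaves ${\rm Det}^{\star}_n(X_j)$ unchanged, and it clears the off-diagonal entries $x_i$ in the $j$-th column without disturbing the surviving diagonal $1$'s, turning $X_j$ into the diagonal matrix with entries $1,\ldots,1,x_j,1,\ldots,1$; Proposition \ref{Pdet} then yields $[x_j]_{\overline{\mathbb{F}}^{\star}}$. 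The delicate point is that the multiplier has to sit on the left, as $x_i\star x_j^{-\star}$, precisely so that $(x_i\star x_j^{-\star})\star x_j=x_i$ cancels. When $x_j=0$ the $j$-th row of $X_j$ vanishes, so $X_j$ is non-invertible and both ${\rm Det}^{\star}_n(X_j)$ and $[x_j]_{\overline{\mathbb{F}}^{\star}}$ equal $[0]_{\overline{\mathbb{F}}^{\star}}$, and the identity persists.

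Combining the two steps gives ${\rm Det}^{\star}_n(M_j)={\rm Det}^{\star}_n(A)\star[x_j]_{\overline{\mathbb{F}}^{\star}}$. Since ${\rm Det}^{\star}_n(A)\neq[0]_{\overline{\mathbb{F}}^{\star}}$ is invertible in $\overline{\mathbb{F}}^{\star}$, I would left-multiply by ${\rm Det}^{\star}_n(A)^{-\star}$ to isolate $[\hat x_j]_{\overline{\mathbb{F}}^{\star}}={\rm Det}^{\star}_n(A)^{-\star}\star{\rm Det}^{\star}_n(M_j)$, and the second equality in \eqref{cramer} follows at once because $\overline{\mathbb{F}}^{\star}$ is abelian. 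The only genuine obstacle is the determinant of $X_j$; once one commits to the factorization $A\star X_j=M_j$ and to \eqref{binet}, everything else is bookkeeping.
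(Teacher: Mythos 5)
Your proof is correct and follows essentially the same route as the paper's: both hinge on the factorization $A\star X_j=(A_1,\ldots,A_{j-1},\mathbf{b},A_{j+1},\ldots,A_n)$ combined with the Binet formula \eqref{binet}, and both dispose of the degenerate case ${\rm Det}^{\star}_n(A)=[0]_{\overline{\mathbb{F}}^{\star}}$ by observing that the system then has either no solution or infinitely many. You in fact supply a detail the paper leaves implicit, namely the row-operation computation showing ${\rm Det}^{\star}_n(X_j)=[x_j]_{\overline{\mathbb{F}}^{\star}}$.
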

	\begin{proof}
		If ${\rm Det}^{\star}_n(A)\neq [0]_{\overline{\mathbb{F}}^{\star}}$, {then it is possible to prove (\cite[Theorem 1.6]{bren})} that the unique solution, up to commutators, of \eqref{sys} is given by ${\bf \hat x}=A^{-\star}\star {\bf b}$ .
Moreover, we can write

	\begin{equation*}
		{\footnotesize\begin{aligned}
			 A\star \begin{pmatrix}
	1&\ldots& 0&\overbrace{\hat x_1}^{j-th}&0&\ldots &0\\
		\vdots&\ddots&0&\vdots&\vdots&\ldots &\vdots\\
	\vdots&\ldots&1&\vdots&\vdots&\ldots &\vdots\\
	\vdots&\ldots&0&\ddots&0&\dots &\vdots\\
	\vdots&\ldots&\vdots&\vdots&1&\dots &0\\
	\vdots&\ldots&\vdots&\vdots&\vdots&\ddots &0\\
	0&\ldots&0&\hat x_n&0&\ldots&1
\end{pmatrix}
		&=(A_1,\ldots, A_{j-1}, A\star \mathbf{\hat x}, A_{j+1}, \ldots, A_n)=(A_1,\ldots, A_{j-1}, \mathbf{b}, A_{j+1}, \ldots, A_n)
\end{aligned}}
\end{equation*}
	where $A_k$ represents the $k$-th column of $A$.
	Therefore, using Binet Formula \eqref{binet}, and the properties of ${\rm Det}^{\star}_n$, we have
		\[{\rm Det}^{\star}_n(A)\star [ {\hat x_j }]_{\overline{\mathbb{F}}^{\star}}={\rm Det}^{\star}_n (A_1,A_2,\ldots, A_{j-1}, \mathbf{b}, A_{j+1}, \ldots, A_n),\]
	which leads to the conclusion.
	
	{Suppose now that ${\rm Det}^{\star}_n(A)= [0]_{\overline{\mathbb{F}}^{\star}}$. Using property (b) of ${\rm Det}^{\star}_n$, we can perform a row reduction of $A$ obtaining an upper triangular matrix  $A'$ such that ${\rm Det}^{\star}_n(A')={\rm Det}^{\star}_n(A)=0$ and hence with at least a zero row. Consider the $n\times (n+1)$ matrix $(A|b)$ whose first $n$ columns are those of $A$ and the $(n+1)$-th is $b$. There are two possibilities: either the row reduction of $(A|b)$ has the same number of zero rows of $A'$, or it has less zero rows than $A'$. In the first case the system has infinite solutions, in the latter the system has no solution.}
	\end{proof}

\noindent Versions of the Cramer Formula in the noncommutative setting can be found in, e.g., \cite{bren, cramer2, cramer1}.

\noindent 	Assume $(\mathbb{F},+,\star)= (\mathcal{L}(B_R), +, *)$, 
	 is the skew field of semi--regular functions in $B_R$, equipped with the $*$--product.
The corresponding Dieudonn\'e--type determinant for
$n\times n$  matrices with entries in $\mathcal{L}(B_R)$ will be denoted by ${\rm Det}_n^*
: M(n, \mathcal{L}(B_R))\to {\overline{\mathcal{L}(B_R)}^{*}}$.
\begin{proposition}\label{Ra} 

\noindent If $A\in  M(n, \mathbb{H}[q])\subset M(n, \mathcal{L}(\mathbb{H})) $, i.e. if all the entries of $A$ are slice regular polynomials in one quaternionic variable,
then one can find a  unique slice regular  polynomial $P\in \mathbb H[q]$, up to commutators, 
such that ${\rm Det}_n^*(A)=[P]_{\overline{\mathcal{L}(\mathbb{H})}^{*}}$.

\end{proposition}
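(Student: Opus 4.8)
The plan is to evaluate ${\rm Det}_n^*(A)$ through the symmetrization map rather than directly, and then to recognize the resulting class as that of a polynomial. First I would make explicit the algebra structure of $\mathbb{H}[q]$: since $q$ and all real–coefficient polynomials are slice preserving, they are central for $*$, and one has an isomorphism of $\mathbb{R}[q]$–algebras $\mathbb{H}[q]\cong\mathbb{H}\otimes_{\mathbb{R}}\mathbb{R}[q]$. Inverting the central domain $\mathbb{R}[q]$ embeds $\mathbb{H}[q]$ into $\mathbb{H}(q):=\mathbb{H}\otimes_{\mathbb{R}}\mathbb{R}(q)\subseteq\mathcal{L}(\mathbb{H})$, a quaternion division algebra (anisotropic, since $\mathbb{R}(q)$ is formally real) with center $\mathbb{R}(q)$, inside which every $*$–quotient of polynomials lies. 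On $\mathbb{H}(q)$ the symmetrization $N(f):=f^s=f*f^c$ takes central values, is multiplicative by Remark \ref{sym}, and so descends to a homomorphism $\bar N\colon\overline{\mathcal{L}(\mathbb{H})}^{*}\to\mathbb{R}(q)^{*}$; moreover, since $\bar N\circ{\rm Det}_n^*$ is a homomorphism on $GL_n$ restricting to $N$ on diagonal matrices (via properties (b), (c) and the Binet formula \eqref{binet}), it is compatible with the Dieudonn\'e determinant in the sense that $\bar N({\rm Det}_n^*(A))$ equals the reduced norm $\mathrm{Nrd}(A)$ of the matrix.

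Next I would check that $\mathrm{Nrd}(A)$ is a genuine polynomial. On one hand, the recursive scheme \eqref{Dn} can be run without ever inverting a quaternionic polynomial: choosing a nonzero pivot $a$ in the first column, scale its row on the left by $a^c$ so the pivot becomes the central polynomial $a^s$, scale the other rows by $a^s$, and clear the first column using only left $*$–multiples as in property (b); the resulting $(n-1)\times(n-1)$ minor again has entries in $\mathbb{H}[q]$. Iterating and collecting the scalar factors coming from property (c) shows ${\rm Det}_n^*(A)=[\,s^{-*}*P\,]$ for some $P\in\mathbb{H}[q]$ and some $s\in\mathbb{R}[q]$ that is a product of symmetrizations, whence $\mathrm{Nrd}(A)=s^{-2}P^s\in\mathbb{R}(q)$. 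On the other hand, that this quotient is actually a polynomial is cleanest from the splitting $\mathbb{H}\otimes_{\mathbb{R}}\mathbb{C}\cong M_2(\mathbb{C})$: base change to $\mathbb{C}(q)$ turns $A$ into a matrix $\Phi(A)\in M_{2n}(\mathbb{C}[q])$ with $\det\Phi(A)=\mathrm{Nrd}(A)$, so $\mathrm{Nrd}(A)\in\mathbb{C}[q]\cap\mathbb{R}(q)=\mathbb{R}[q]$.

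I would then use positivity. For real $x$ the matrix $A(x)\in M_n(\mathbb{H})$ satisfies $\mathrm{Nrd}(A(x))=\det\Phi(A(x))\ge 0$, the classical nonnegativity of the complex determinant of a complexified quaternionic matrix; hence $\mathrm{Nrd}(A)$ is a real polynomial that is nonnegative on $\mathbb{R}$. Such a polynomial has positive leading coefficient and all real roots of even multiplicity, so it factors as $c\prod_i(q-r_i)^{2e_i}\prod_j\big((q-\alpha_j)*(q-\bar\alpha_j)\big)^{f_j}$ with $c>0$, $r_i\in\mathbb{R}$ and $\alpha_j\in\mathbb{H}\setminus\mathbb{R}$. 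Setting $P:=\sqrt{c}*\prod_i(q-r_i)^{e_i}*\prod_j(q-\alpha_j)^{f_j}\in\mathbb{H}[q]$ and using that $(\cdot)^s$ is multiplicative with $(q-\alpha)^s=(q-\alpha)*(q-\bar\alpha)$, one gets $P^s=\mathrm{Nrd}(A)$. Thus $\bar N({\rm Det}_n^*(A))=\mathrm{Nrd}(A)=P^s=\bar N([P])$: the determinant and the explicit polynomial $P$ have the same symmetrization.

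Finally I would upgrade this equality of symmetrizations to the equality ${\rm Det}_n^*(A)=[P]_{\overline{\mathcal{L}(\mathbb{H})}^{*}}$, and this is where the real difficulty sits. It amounts to the injectivity of $\bar N$ on the abelianization, i.e.\ to the statement that every element of symmetrization $1$ is a product of commutators; for a quaternion algebra this is the vanishing of $SK_1$, which holds because the index $2$ is squarefree (Wang's theorem). A seed of this fact, provable directly in the present language, is that by Skolem--Noether each nonzero $a$ and its conjugate $a^c$ are conjugate in $\mathbb{H}(q)$, so $[a]=[a^c]$ and $[a^s]=[a]^2$ in $\overline{\mathcal{L}(\mathbb{H})}^{*}$; I expect the full argument to require isolating the vanishing of $SK_1(\mathbb{H}(q))$ as a separate lemma. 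Granting it, equal symmetrizations force equal classes, the proposition follows, and the uniqueness \emph{up to commutators} is exactly the well–definedness of the class. The main obstacle is therefore not the computation of $\mathrm{Nrd}(A)$ but this injectivity statement.
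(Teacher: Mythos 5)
Your argument is correct, but it takes a genuinely different route from the paper: the paper does not actually prove Proposition \ref{Ra}, it invokes Lemma 11 of \cite{Rasheed}, which is an elimination argument carried out for general Ore (skew) polynomial rings and their skew fields of fractions. You instead exploit the special structure of $(\mathbb{H}[q],+,*)$ as $\mathbb{H}\otimes_{\mathbb{R}}\mathbb{R}[q]$, an order in the quaternion division algebra $\mathbb{H}(q)$ over the central field $\mathbb{R}(q)$, and you channel everything through the reduced norm: the identification $\bar N\circ{\rm Det}^*_n=\mathrm{Nrd}=\det\Phi(\cdot)$, the polynomiality and pointwise nonnegativity of the Study determinant, the factorization of a positive semidefinite real polynomial as a symmetrization $P^s$, and finally the injectivity of the norm on the abelianization, i.e.\ $SK_1(\mathbb{H}(q))=1$. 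All of these steps check out, including the denominator-free row reduction showing ${\rm Det}^*_n(A)=[s^{-*}*P']$ with $s\in\mathbb{R}[q]$ central. You correctly isolate the one genuine external input as the $SK_1$-vanishing; note, though, that for a quaternion division algebra this is far more elementary than the general Wang theorem: if $\mathrm{Nrd}(x)=1$ and $x\notin\mathbb{R}(q)$, then Hilbert 90 applied to the quadratic extension $\mathbb{R}(q)(x)$ gives $x=y\,\sigma(y)^{-1}$, and realizing $\sigma$ by conjugation via Skolem--Noether (which you already use for $[a]=[a^c]$) exhibits $x$ as a single commutator; so the step you flag as the main obstacle is a short lemma, not a deep one. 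What your approach buys is independence from \cite{Rasheed} and an explicit representative, namely any $P$ with $P^s=\det\Phi(A)$, hence with controlled degree; what it costs is generality (the Ore argument works for arbitrary skew polynomial rings, yours is tied to the quaternionic norm form). Two cosmetic points: $\bar N$ should be described as defined on the image of $\mathbb{H}(q)^*$ in $\overline{\mathcal{L}(\mathbb{H})}^{*}$ (the symmetrization of a general semiregular function is slice preserving but need not lie in $\mathbb{R}(q)$), and the singular case ${\rm Det}^*_n(A)=[0]_{\overline{\mathcal{L}(\mathbb{H})}^{*}}$ should be dispatched separately with $P=0$; neither affects the substance.
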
  

\noindent The  previous result is  Lemma 11
 proved in \cite{Rasheed}
  in the setting of Ore domains, rephrased here for matrices with entries in $\mathbb H[q]$. 
%

\section{Non commutative Resultant}\label{RES}

\noindent The theory of resultant is a  powerful tool in Algebraic Geometry
for the investigation of common zeros of polynomials. In the
quaternionic setting there have been done some attempts to
introduce a resultant of polynomials with
quaternionic coefficients
see, for instance, \cite{cileni} and \cite{erik} for
polynomials in one  variable 
and \cite{Rasheed}
for polynomials in two variables.

\noindent In the present section we want to
give a definition of {\em regular resultant} in $\mathbb{H}[q_1,q_2]$ and
study some of its properties in relation with the existence of common zeros and common factors of slice regular polynomials.
\noindent
In the complex setting (see, e.g., \cite{CLO}) two polynomials
$S$ and $T$ have an irreducible common factor if and only if there exist
two polynomials $U$ and $V$
with prescribed degrees such that
\begin{equation}\label{UFD} S\cdot U+T\cdot V\equiv 0.\end{equation}
This is a consequence of the fact that the ring of complex polynomials is a {\em Unique Factorization Domain} (UFD).
This is not true for the ring of slice regular quaternionic polynomials. For instance, in $\mathbb{H}[q]$, the
slice regular polynomial $q^2+1$ can be decomposed as
$q^2+1=(q-J)*(q+J)$ for any $J \in \mathbb{S}$. 

\noindent Condition \eqref{UFD} can be written as a homogeneous linear system
whose unknowns are the coefficients (that might be polynomials in the several variables
case) of $U$ and $V$. The existence of a non-zero solution is
equivalent to the vanishing of the determinant of the associated
matrix, known as the {\em Sylvester matrix} of $S$ and $T$. This
determinant is usually called the {\em resultant} of $S$ and $T$.


Let us apply this procedure in the setting of slice regular polynomials. 
Given $P,Q \in \mathbb{H}[q_1,q_2]$, we look for two slice regular polynomials $H,K\in \mathbb{H}[q_1,q_2]$, of proper degrees, such that 
 \begin{equation}\label{zerocomune}
 P*H+Q*K\equiv 0.
 \end{equation}
 Due to the non-commutativity of the two variables, the construction
 of the corresponding linear system  depends
 whether we choose to express $P,Q,H$ and $K$ as polynomials in $q_1$
 with coefficients in $\mathbb{H}[q_2]$ or as polynomials in $q_2$
 with coefficients in $\mathbb{H}[q_1]$.  More precisely, if
\[ \deg_{q_1}P=n, \ \deg_{q_2}P=r, \ \deg_{q_1}Q=m, \ \deg_{q_2}Q=s,\]
then, recalling equation \eqref{coefficienti1}, 
and considering $P,Q,H$ and $K$ in $[\mathbb{H}[q_2]]_{\mathcal{SR}}[q_1]$,
we can write 
 $$P(q_1,q_2)=\sum_{k=0}^{n} q_1^k P_k(q_2),
  \quad \quad 
Q(q_1,q_2)=\sum_{k=0}^{m} q_1^k
Q_k(q_2);$$
as in the complex case,  we look for $H$ and $K$, satisfying equation \eqref{zerocomune}, such that $\deg_{q_1}H<\deg_{q_1}Q$,  $\deg_{q_1}K<\deg_{q_1}P$,
and hence of the form
$$H(q_1,q_2)=\sum_{p=0}^{m-1} q_1^p H_p(q_2),  \quad  \quad 
K(q_1,q_2)=\sum_{p=0}^{n-1} q_1^p K_p(q_2).$$
Then equation \eqref{zerocomune} can  be written as
  \begin{equation}
  	\sum_{k=0}^{n} q_1^k P_k(q_2)*\sum_{p=0}^{m-1} q_1^p H_p(q_2)+\sum_{k=0}^{m} q_1^k
Q_k(q_2)*\sum_{p=0}^{n-1} q_1^p K_p(q_2)\equiv 0\label{fattorecomune}\end{equation} 
By the Identity Principle of Polynomials, this equation leads to the following homogeneous linear system:
\begin{equation}\label{linearsystem}
\left\{
\begin{array}{lcl}
P_0*H_0 + Q_0*K_0  &\equiv&0\\
P_1*H_0+P_0*H_1+Q_1*K_0+Q_0*K_1&\equiv&0\\
\vdots &\vdots&\\
P_n*H_{m-1}+Q_m*K_{n-1}&\equiv&0
\end{array}
\right.
\end{equation}
with unknowns  
$H_0,\cdots, H_{m-1},K_0,\cdots,K_{n-1}$ 
in ${\mathbb{H}[q_2]}$ and
associated matrix, with entries in ${\mathbb{H}[q_2]}$, of dimension ${(m+n)}\times{(m+n)}$
 $$A(q_2)=\begin{pmatrix}
P_0 &      0 & \cdots & 0 & Q_0 & 0 & \cdots & 0\\
P_{1}&P_0&\cdots&0&Q_{1}&Q_0&\cdots&0\\
P_{2}&P_{1}&\ddots&\vdots&Q_{2}&Q_{1}&\ddots&\vdots\\
\vdots&\vdots&\ddots&\vdots&\vdots&\vdots&\ddots&Q_0&\\
P_n&\vdots&\ddots&P_0&\vdots&\vdots&\ddots&Q_1\\
0&P_n&\ddots&P_1&\vdots&\vdots&\ddots&\vdots\\
\vdots&0&\ddots&\vdots&Q_m&Q_{m-1}&\ddots&\vdots&\\
\vdots&\vdots&\ddots&\vdots&0&Q_m&\ddots&\vdots\\
\vdots&\vdots&\ddots&\vdots&\vdots&\vdots& \ddots &\vdots \\
0&0&\cdots&P_n&0&0&\dots&Q_m\\
\end{pmatrix}.$$
On the other hand, if we choose to express $P,Q$ as polynomials  in $q_2$ with coefficients in $\mathbb{H}[q_1]$ then, by  equation \eqref{coefficienti3},  we have
\[P(q_1,q_2)=\sum\limits_{k=0}^{r}{q_2}^k*{\tilde P}_k(q_1) \quad \text{and} \quad Q(q_1,q_2)=\sum\limits_{k=0}^{{s} }{q_2}^k*{\tilde Q}_k(q_1).\]
 In this case, we look for $\tilde H$ and $\tilde K$ in $[\mathbb{H}[q_1]]_{\mathcal{SR}}[q_2]$ such that $\deg_{q_2}\tilde H<\deg_{q_2}Q$ and $\deg_{q_2}\tilde K<\deg_{q_2}P$,
 i.e. 
 \[\tilde{H}(q_1,q_2)=\sum\limits_{p=0}^{s-1} q_2^p* \tilde{H}_p(q_1)
 \quad \text{and} \quad
 \tilde {K}(q_1,q_2)=\sum\limits_{p=0}^{r-1} q_2^p *\tilde{K}_p(q_1),\]
   that satisfy 
\begin{equation}\label{zerocomune2}
P*\tilde H+Q*\tilde K\equiv 0.
\end{equation} 
 The previous equation, using again the Identity Principle for Polynomials, is equivalent to a homogeneous linear system in the unknowns  $\tilde{H}_0,\cdots , \tilde{H}_{s-1},\tilde{K}_0,\cdots,\tilde{K}_{r-1}$ in  ${\mathbb{H}[q_1]}$,
 whose associated matrix of dimension ${(r+s)}\times{(r+s)}$, with entries in  $\mathbb{H}[q_1]$, 
 is 
$$B(q_1)=
\begin{pmatrix}
\tilde P_0 &      0 & \cdots & 0 &\tilde Q_0 & 0 & \cdots & 0\\
\tilde P_{1}&\tilde P_0&\cdots&0&\tilde Q_{1}&\tilde Q_0&\cdots&0\\
\tilde P_{2}&\tilde P_{1}&\ddots&\vdots&\tilde Q_{2}&\tilde Q_{1}&\ddots&\vdots\\
\vdots&\vdots&\ddots&\vdots&\vdots&\vdots&\ddots&\tilde Q_0&\\
\tilde P_s&\vdots&\ddots&\tilde P_0&\vdots&\vdots&\ddots&\tilde Q_1\\
0&\tilde P_s&\ddots&\tilde P_1&\vdots&\vdots&\ddots&\vdots\\
\vdots&0&\ddots&\vdots&\tilde Q_r&\tilde Q_{r-1}&\ddots&\vdots&\\
\vdots&\vdots&\ddots&\vdots&0&\tilde Q_r&\ddots&\vdots\\
\vdots&\vdots&\ddots&\vdots&\vdots&\vdots& \ddots &\vdots \\
0&0&\cdots&\tilde P_s&0&0&\dots&\tilde Q_r\\
\end{pmatrix}.$$
The existence of solutions of equations \eqref{zerocomune} and \eqref{zerocomune2}, is then related to the value of the ${\rm Det}^*$ determinants of the {\em Sylvester} matrices $A(q_2)$ and $B(q_1)$. It is therefore natural to give the following    
\begin{definition}\label{risultante}
With the above notations, the {\em regular resultant} of $P$ and $Q$, with respect to the variable $q_1$, is
\[{\rm Res}(P,Q;q_1):={\rm Det}^*_{n+m}(A(q_2));\]
the {\em regular resultant} of $P$ and $Q$, with respect to the variable $q_2$, is
\[{\rm Res}(P,Q;q_2):={\rm Det}^*_{r+s}(B(q_1)).\]
\end{definition}

\begin{remark}\label{polyrep}
	Thanks to Proposition \ref{Ra} we have that both the regular resultants have a polynomial representative
	in $\mathbb{H}[q_2]$ and $\mathbb{H}[q_1]$ respectively. 
\end{remark}

\noindent In \cite[Definition 13]{Rasheed}, Rasheed gives a definition of resultant
in the framework of 
polynomials in two variables sitting in
Ore algebras which has an
expression similar to ${\rm Res}(P,Q;q_2)$. Due to the chosen setting, 
the author only defines the resultant with respect to the second
variable and studies how to compute it more efficiently.

In our approach, instead, we make use of our constructive definitions and 
investigate the relationships between regular resultants and zeros of the involved polynomials.
Let us begin with an example that enlightens this phenomenon.
\begin{example}\label{esres}
	
	Consider the 
slice regular 	polynomials $$P(q_1,q_2)=(q_1-i)*(q_2-j)$$
	and $$Q(q_1,q_2)=(q_1-i)*(q_2-k).$$ From { Proposition \ref{mlineare}}, we argue that
	these two polynomials vanish on $\{i\}\times \mathbb{C}_i$.

\noindent	To compute the regular resultants, we first consider  
	$$	A(q_2)=\begin{pmatrix}
	-i*(q_2-j)&-i*(q_2-k)\\
	(q_2-j)&(q_2-k)\\
	\end{pmatrix}.$$
Then ${\rm Res}(P,Q;q_1)={\rm Det}_2^*(A(q_2))$ is identically zero as a polynomial in $q_2$.\\ On the other hand,
	$$
	B(q_1)=\begin{pmatrix}
	(q_1-i)*j&(q_1-i)*k\\
	(q_1-i)&(q_1-i)\\
	\end{pmatrix},$$
	thus  ${\rm Res}(P,Q;q_2)={\rm Det}^*_2(B(q_1))=(q_1-i)^{*2}*(k-j)$,
	that vanishes if and only if $q_1=i.$

\noindent Therefore, the common zero locus $\{i\}\times \mathbb{C}_i$  of $P$ and $Q$ 
is contained in the  zero loci of both their regular resultants.
        
\end{example}

The first result for generic slice regular polynomials  is the following.
\begin{lemma}\label{commonleft} 
	Let $P,Q\in \mathbb{H}[q_1,q_2]$. Then
\begin{enumerate}
\item ${\rm Res}(P,Q;q_1)\equiv 0$ if and only if there exist non-zero  $H$ and $K$  in $[\mathbb{H}[q_2]]_{\mathcal {SR}}[q_1]$,
{with $\deg_{q_1}H<\deg_{q_1}Q$ and $\deg_{q_1}K<\deg_{q_1}P$,}
  such that \ $P*H+Q*K\equiv0$;
\item ${\rm Res}(P,Q;q_2)\equiv 0$ if and only if there exist non-zero $\tilde
  H$ and $\tilde K$ in $[\mathbb{H}[q_1]]_{\mathcal{SR}}[q_2]$, {with $\deg_{q_2}\tilde H<\deg_{q_2}Q$ and $\deg_{q_2}\tilde K<\deg_{q_2}P$,} such
  that $P*\tilde H+Q*\tilde K\equiv 0$.
	
\end{enumerate} 
\end{lemma}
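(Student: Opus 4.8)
The plan is to recognize that Lemma \ref{commonleft} is, after unwinding the definitions, nothing more than a solvability statement for the homogeneous $*$-linear system attached to the Sylvester matrix, and to deduce it from the Cramer Formula (Proposition \ref{Cramer}) together with the polynomial representability of the determinant (Proposition \ref{Ra}). I will only treat item (1); item (2) follows verbatim upon exchanging the roles of $q_1$ and $q_2$, replacing $A(q_2)$ by $B(q_1)$ and $\mathbb{H}[q_2]$ by $\mathbb{H}[q_1]$.

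First I would make the dictionary explicit. Writing $P,Q$ as in \eqref{coefficienti1} and adopting the degree-constrained ansatz $H=\sum_{p=0}^{m-1}q_1^pH_p(q_2)$, $K=\sum_{p=0}^{n-1}q_1^pK_p(q_2)$ already builds in the bounds $\deg_{q_1}H<\deg_{q_1}Q$ and $\deg_{q_1}K<\deg_{q_1}P$. As already derived in the text passing from \eqref{fattorecomune} to \eqref{linearsystem}, the single relation $P*H+Q*K\equiv0$ is equivalent to the homogeneous system $A(q_2)*\mathbf{x}\equiv0$ with unknown column $\mathbf{x}=(H_0,\dots,H_{m-1},K_0,\dots,K_{n-1})^t$ whose entries lie in $\mathbb{H}[q_2]$. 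By Definition \ref{risultante} and Proposition \ref{Ra}, ${\rm Res}(P,Q;q_1)={\rm Det}^*_{n+m}(A(q_2))$ has a polynomial representative, and ${\rm Res}(P,Q;q_1)\equiv0$ means precisely ${\rm Det}^*_{n+m}(A(q_2))=[0]_{\overline{\mathcal{L}(\mathbb{H})}^{*}}$, i.e. $A(q_2)$ is non-invertible over the skew field $\mathcal{L}(\mathbb{H})$ (in the variable $q_2$), since ${\rm Det}^*_{n+m}$ is group-valued on invertible matrices.

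The core step is then a direct reading of Proposition \ref{Cramer} over $\mathcal{L}(\mathbb{H})$. If ${\rm Det}^*_{n+m}(A(q_2))\neq[0]$, the proposition gives uniqueness of the solution of $A*\mathbf{x}=\mathbf{b}$, so for $\mathbf{b}=0$ the only solution is $\mathbf{x}=0$; hence no nonzero $H,K$ can exist. Conversely, if ${\rm Det}^*_{n+m}(A(q_2))=[0]$, the row-reduction argument in the proof of Proposition \ref{Cramer} yields an upper-triangular form with at least one zero row; for the homogeneous system the augmented matrix has exactly the same zero rows, so we are in the infinitely-many-solutions branch and there is a nonzero solution $\mathbf{x}\in\mathcal{L}(\mathbb{H})^{\,n+m}$.

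The hard, and really the only delicate, part will be upgrading this solution from one with semi-regular entries to one with slice regular \emph{polynomial} entries, as the statement demands. Each entry of $\mathbf{x}$ is obtained from the polynomial entries of $A(q_2)$ by $*$-rational operations, hence lies in the skew field of quotients of $\mathbb{H}[q_2]$ and has the form $f^{-*}*g$ with $f,g\in\mathbb{H}[q_2]$; recalling $f^{-*}=(f^s)^{-1}f^c$ with $f^s$ slice preserving, I let $D(q_2)$ be the product of the symmetrizations $f^s$ of all denominators occurring. Then $D$ has real coefficients, so it commutes and $\mathbf{x}*D$ has all entries in $\mathbb{H}[q_2]$. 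Since $D$ is a nonzero scalar, right multiplication preserves the relation, $A*(\mathbf{x}*D)=(A*\mathbf{x})*D\equiv0$; it keeps $\mathbf{x}*D$ nonzero because $\mathcal{L}(\mathbb{H})$ has no zero divisors; and it leaves the number of coefficient slots unchanged, so the bounds $\deg_{q_1}H<\deg_{q_1}Q$ and $\deg_{q_1}K<\deg_{q_1}P$ survive (as $\deg_{q_1}(H*D)=\deg_{q_1}H$). This produces nonzero $H,K\in[\mathbb{H}[q_2]]_{\mathcal{SR}}[q_1]$ with $P*H+Q*K\equiv0$; that both (not merely the pair) are nonzero follows because $P*H\equiv0$ with $P\not\equiv0$ would force $H\equiv0$. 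For the converse, any such nonzero polynomial pair is in particular a nonzero solution of $A(q_2)*\mathbf{x}\equiv0$, forcing $A(q_2)$ non-invertible and hence ${\rm Res}(P,Q;q_1)\equiv0$, which completes (1).
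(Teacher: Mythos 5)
Your proof is correct and follows the same route as the paper: reduce $P*H+Q*K\equiv 0$ to the homogeneous system $A(q_2)*\mathbf{x}\equiv 0$ and invoke the Dieudonn\'e/Cramer theory of Section \ref{sceD} to equate the existence of a non-zero solution with the vanishing of ${\rm Det}^*_{n+m}(A(q_2))$. The paper's own proof is only a two-line appeal to that theory, so your additional step of clearing denominators via the symmetrizations to upgrade a semi-regular solution to a genuinely polynomial pair $(H,K)$ of the right degrees is a detail the paper leaves implicit rather than a different approach.
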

\begin{proof} 
  Since we reduced the existence of non-zero solutions of equation $P*H+Q*K\equiv 0$
  to the existence of a non-vanishing solution of the homogeneous
  linear system \eqref{linearsystem}, the claim $(1)$ follows using the
  theory of Dieudonn\'e type determinants introduced in Section \ref{sceD}.
  Indeed there exist non-zero $H$ and $K$ in
  $[\mathbb{H}[q_2]]_{\mathcal{SR}}[q_1]$ such that $P*H+Q*K\equiv 0$ {if and
  only if} the linear system whose associated matrix is $A(q_2)$ has
  ${\rm Det}_{n+m}^*A(q_2)\equiv 0$. By Definition \ref{risultante} we conclude. An
  analogous argument can be used to prove the second equivalence.
\end{proof}

\noindent Let $\langle P,Q\rangle$ denote the right ideal generated by the slice regular polynomials $P$ and $Q$ in $\mathbb{H}[q_1,q_2]$, namely
\[\langle P,Q\rangle :=\{ P*R+Q*S \ : \ R,S\in \mathbb{H}[q_1,q_2]\}.\]
 With the notations introduced above, we can state and prove the following
\begin{theorem} \label{combinazione}
Let $P,Q \in \mathbb{H}[q_1,q_2]$. Then
\begin{enumerate}
	\item the regular resultant ${\rm Res}(P,Q;q_1)$ belongs to $\langle P,Q\rangle\cap \mathbb{H}[q_2]$;
	\item the regular resultant ${\rm Res}(P,Q;q_2)$ belongs to $\langle P,Q\rangle\cap \mathbb{H}[q_1]$.
\end{enumerate}
\end{theorem}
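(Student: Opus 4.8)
The plan is to prove statement (1) in detail, statement (2) being completely analogous after exchanging the roles of $q_1$ and $q_2$ and replacing the Sylvester matrix $A(q_2)$ by $B(q_1)$. By Remark \ref{polyrep} the resultant ${\rm Res}(P,Q;q_1)={\rm Det}^*_{n+m}(A(q_2))$ already admits a polynomial representative in $\mathbb{H}[q_2]$, so what must be shown is that one such representative can be written as $P*H+Q*K$. If ${\rm Res}(P,Q;q_1)\equiv 0$ there is nothing to prove, so I assume ${\rm Det}^*_{n+m}(A(q_2))\neq[0]_{\overline{\mathcal{L}(\mathbb{H})}^{*}}$.

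First I would record the identity linking the Sylvester matrix to the right ideal. Writing $P=\sum_k q_1^k P_k$ and $Q=\sum_k q_1^k Q_k$, the $(p{+}1)$-th column of $A(q_2)$ (for $0\le p\le m-1$) is exactly the coordinate vector of $q_1^p*P$ in the basis $1,q_1,\dots,q_1^{n+m-1}$, and the last $n$ columns are the coordinate vectors of $q_1^p*Q$ for $0\le p\le n-1$. Since $q_1^p$ has real coefficients it is central for the $*$-product, so $q_1^p*P=P*q_1^p$ and $q_1^p*Q=Q*q_1^p$ all belong to $\langle P,Q\rangle$. Consequently, for any column $\mathbf{c}=(h_0,\dots,h_{m-1},k_0,\dots,k_{n-1})^t$ with entries in $\mathbb{H}[q_2]$, the vector $A(q_2)\star\mathbf{c}$ is the coordinate vector of $P*H+Q*K\in\langle P,Q\rangle$, where $H=\sum_{p}q_1^p h_p$ and $K=\sum_{p}q_1^p k_p$ automatically satisfy $\deg_{q_1}H<\deg_{q_1}Q$ and $\deg_{q_1}K<\deg_{q_1}P$. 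Thus it suffices to find such a $\mathbf{c}$ for which $A(q_2)\star\mathbf{c}$ is the coordinate vector of a $q_1$-free polynomial that represents ${\rm Res}(P,Q;q_1)$.

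The natural candidate comes from a cofactor construction. For $j=1,\dots,n+m$ let $M_j\in\mathbb{H}[q_2]$ be a polynomial representative (furnished by Proposition \ref{Ra}) of the cofactor ${\rm Det}^*_{n+m}$ of the matrix obtained from $A(q_2)$ by replacing its $j$-th column with $\mathbf{e}_1=(1,0,\dots,0)^t$, exactly as in Cramer's formula \eqref{cramer}. Assembling $H=\sum_{p=0}^{m-1}q_1^p M_{p+1}$ and $K=\sum_{p=0}^{n-1}q_1^p M_{m+p+1}$, the $q_1$-coefficients of $P*H+Q*K$ are precisely the entries of $A(q_2)\star\mathbf{M}$ with $\mathbf{M}=(M_1,\dots,M_{n+m})^t$. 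The statement I would then establish is the pair of identities $\sum_j A_{ij}*M_j=0$ for $i\ge 2$ and $\sum_j A_{1j}*M_j=D$, where $D\in\mathbb{H}[q_2]$ represents ${\rm Det}^*_{n+m}(A(q_2))$; granting these one gets $P*H+Q*K=D\in\langle P,Q\rangle\cap\mathbb{H}[q_2]$, which is the claim.

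The hard part will be establishing these two identities as \emph{honest} equalities in $\mathbb{H}[q_2]$, not merely in the abelianization $\overline{\mathcal{L}(\mathbb{H})}^{*}$. They are the non-commutative analogues of the Laplace expansion along the first row and of the alien-cofactor relations $\sum_j a_{ij}C_{1j}=\delta_{i1}\det$, and the genuine obstruction is that ${\rm Det}^*_{n+m}$ is only a homomorphism into $\overline{\mathcal{L}(\mathbb{H})}^{*}$ and possesses no built-in cofactor expansion. I would attack the row $i=1$ using Cramer's rule (Proposition \ref{Cramer}): for the true solution $\hat{\mathbf c}$ of $A(q_2)\star\hat{\mathbf c}=\mathbf{e}_1$ over $\mathcal{L}(\mathbb{H})$ the relation $\sum_j A_{1j}*\hat c_j=1$ holds exactly, and the content to verify is that passing from the semi-regular $\hat c_j$ to the polynomial representatives $M_j$ changes this equality only by a common right factor, which can be cleared by the right Ore property of $\mathbb{H}[q_2]$; the rows $i\ge 2$ should then follow by expressing $\sum_j A_{ij}*M_j$ as a ${\rm Det}^*$ of a matrix with two equal rows, which is $[0]$ since such a matrix is singular. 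Finally, because by Proposition \ref{Ra} the resultant is defined only up to commutators, it is enough to produce one representative of its class inside the right ideal, and the combination $P*H+Q*K$ provides exactly such a representative. Running the identical argument with $B(q_1)$ in place of $A(q_2)$ yields ${\rm Res}(P,Q;q_2)\in\langle P,Q\rangle\cap\mathbb{H}[q_1]$, proving (2).
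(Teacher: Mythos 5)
Your proposal is essentially the paper's own argument: after disposing of the trivial case ${\rm Res}(P,Q;q_1)\equiv 0$, one passes to the inhomogeneous system $A(q_2)\star\mathbf{x}=\mathbf{e}$ (the paper takes the right-hand side $(0,\dots,0,1)^t$ rather than your $\mathbf{e}_1$, which is immaterial), solves it by the non-commutative Cramer formula of Proposition \ref{Cramer}, and clears the right denominator ${\rm Res}(P,Q;q_1)^{-*}$ to exhibit a polynomial representative of the resultant in the form $P*H+Q*K$. The paper simply asserts that the Cramer solution lies in $\mathbb{H}[q_2]*{\rm Res}(P,Q;q_1)^{-*}$ and never formulates the exact cofactor identities you flag as the hard part, so your additional care (working with the exact solution over $\mathcal{L}(\mathbb{H})$ and invoking the Ore property) is compatible with, and if anything more scrupulous than, the published proof.
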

\begin{proof}  
We give the proof for ${\rm Res}(P,Q;q_1),$
  the other case can be managed in the same way. 
  By Definition \ref{risultante}, and thanks to Remark \ref{polyrep}, ${\rm Res}(P,Q;q_1)\in \mathbb{H}[q_2]$.  
 If
  ${\rm Res}(P,Q;q_1)\equiv 0,$ then the claim follows from Lemma \ref{commonleft}. 
  When ${\rm Res}(P,Q;q_1)\not\equiv 0,$ consider the following equation  
  $$P*H+Q*K\equiv 1,$$
  with unknowns $H$ and $K$ in $[\mathbb{H}[q_2]]_{\mathcal {SR}}[q_1]$.
  Arguing as before, we can transform this equation
  in a non-homogeneous linear
  system, analogous to \eqref{linearsystem}, 
  \begin{equation}\label{sysnon}
  	A(q_2)*{\bf x}=(0, \ldots, 0,1)^t.
  	\end{equation}
  Now ${\rm Det}^*_{n+m}(A(q_2))\equiv {\rm Res}(P,Q;q_1)\not\equiv 0.$ 
  Thanks to the adapted Cramer Formula \eqref{cramer}, and to Proposition \ref{Ra}, we can compute a solution ${\bf \hat x}=(\hat H_0,\cdots, \hat H_{m-1},\hat K_0,\cdots,\hat K_{n-1})^t$ of \eqref{sysnon}, where $\hat H_j,\hat K_\ell\in
\mathbb{H}[q_2]*{\rm Res}(P,Q;q_1)^{-*}$ for any $j,\ell$.   
Setting 
\[H_j=\hat H_j*{\rm Res}(P,Q;q_1) \quad  \text{and} \quad K_\ell=\hat{K}_\ell*{\rm Res}(P,Q;q_1)\] 
for any $j,\ell$, we obtain $H(q_1,q_2)=\sum_{j=0}^nq_1^jH_j(q_2)$ and $K(q_1,q_2)=\sum_{\ell=0}^mq_1^\ell K_\ell(q_2)$ such that
\[P*H+Q*K={\rm Res}(P,Q;q_1).\]
\end{proof}
As a direct consequence of the previous result, we show that the phenomenon occurring in Example \ref{esres} is in fact general. 
Indeed, thanks to Proposition \ref{zeri*}, we  immediately 
get the following
\begin{corollary}\label{zeriris}
 
 Let $P,Q$ be two  slice regular polynomials in $\mathbb{H}[q_1,q_2]$.
  If they have a common zero $(a,b)$ such that $ab=ba$, 
then both ${\rm Res}(P,Q;q_1)$ and ${\rm Res}(P,Q;q_2)$
vanish at $(a,b)$ as well. \end{corollary}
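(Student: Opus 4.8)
The plan is to deduce the corollary directly from Theorem \ref{combinazione} together with Proposition \ref{zeri*}, exploiting the fact that pointwise evaluation is additive even though it fails to be multiplicative in general (Remark \ref{valutazione}).

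First, I would apply Theorem \ref{combinazione} to express the resultant as an element of the right ideal $\langle P,Q\rangle$: there exist $H,K\in\mathbb{H}[q_1,q_2]$ such that
\[
{\rm Res}(P,Q;q_1)=P*H+Q*K.
\]
Next, since $(a,b)$ is a common zero of $P$ and $Q$ with $ab=ba$, I would invoke Proposition \ref{zeri*} on each summand: as $P$ vanishes at $(a,b)$, the $*$-product $P*H$ vanishes at $(a,b)$, and likewise $Q*K$ vanishes at $(a,b)$. Finally, because evaluation at a fixed point respects addition, the value of $P*H+Q*K$ at $(a,b)$ is the sum of these two (vanishing) values, and therefore ${\rm Res}(P,Q;q_1)$ vanishes at $(a,b)$. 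Since ${\rm Res}(P,Q;q_1)$ depends only on $q_2$ (indeed it lies in $\mathbb{H}[q_2]$ by Theorem \ref{combinazione}), this is precisely the statement that it vanishes at $q_2=b$. The identical argument, using part (2) of Theorem \ref{combinazione} and the fact that ${\rm Res}(P,Q;q_2)\in\mathbb{H}[q_1]$, settles the case of the second resultant and shows that it vanishes at $q_1=a$.

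I do not anticipate any substantial obstacle, as the corollary is an immediate consequence once the ideal membership furnished by Theorem \ref{combinazione} is in hand. The only subtle point is that one must \emph{not} argue naively that ``$P$ vanishes at $(a,b)$ forces $P*H$ to vanish at $(a,b)$'': evaluation is not a homomorphism for the $*$-product (Remark \ref{valutazione}), so such an implication is false for generic zeros. It is exactly the commuting hypothesis $ab=ba$ that rescues the argument, since this is the precise setting of Proposition \ref{zeri*}; once that proposition has been applied separately to $P*H$ and to $Q*K$, combining the two is trivial because evaluation is linear over sums.
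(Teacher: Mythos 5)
Your proof is correct and follows exactly the route the paper intends: the paper derives this corollary as an immediate consequence of Theorem \ref{combinazione} (ideal membership ${\rm Res}=P*H+Q*K$) combined with Proposition \ref{zeri*} applied to each summand. Your added remark about why the hypothesis $ab=ba$ is essential (evaluation not being multiplicative) is a correct and welcome clarification, but the argument itself is the same.
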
	

If moreover we know that two slice regular polynomials have a common left (linear) factor, we obtain a stronger result.

\begin{proposition}\label{cor}
  Let $P,Q$ be two slice regular polynomials in $\mathbb{H}[q_1,q_2]$ and
  let $a,b\in \mathbb{H}$.
	\begin{enumerate}
		\item If $P$ and $Q$ have the left common factor $(q_1-a)
		$, then
		${\rm Res}(P,Q;q_1)\equiv 0$;
		\item if $P$ and $Q$ have the left common factor $(q_2-b)$,  then
		${\rm Res}(P,Q;q_2)\equiv 0$.
		
	\end{enumerate}
\end{proposition}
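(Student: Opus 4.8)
The plan is to avoid the naive commutative-style construction and instead deduce the vanishing of the resultant from the geometry of the common zero set together with the Identity Principle. First I would observe that the obvious attempt---applying Lemma \ref{commonleft} with the cofactors of the common factor, i.e. writing $P=(q_1-a)*\hat P$ and $Q=(q_1-a)*\hat Q$ and setting $H=\hat Q$, $K=-\hat P$ (whose degrees in $q_1$ do satisfy the required bounds)---does not work: one computes $P*H+Q*K=(q_1-a)*(\hat P*\hat Q-\hat Q*\hat P)$, which is not identically zero because the $*$-product is non-commutative. This is exactly where the UFD-based argument of the complex case breaks down, and it is the main obstacle to a purely algebraic proof via Lemma \ref{commonleft}.

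To bypass it, for part (1) I would argue as follows. By Proposition \ref{mlineare}, the hypothesis that $(q_1-a)$ is a common left factor of $P$ and $Q$ is equivalent to $P$ and $Q$ both vanishing on $\{a\}\times C_a$. Hence, for every $b\in C_a$, the pair $(a,b)$ is a common zero of $P$ and $Q$, and by the very definition of $C_a$ it satisfies $ab=ba$. Corollary \ref{zeriris} then applies to each such pair and gives that ${\rm Res}(P,Q;q_1)$ vanishes at $(a,b)$; since this resultant is, by Remark \ref{polyrep}, a slice regular polynomial in the single variable $q_2$, this means its polynomial representative vanishes at $q_2=b$ for every $b\in C_a$.

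The final step upgrades pointwise vanishing on $C_a$ to identical vanishing. The key remark is that $C_a$ always contains a full slice: if $a\notin\mathbb{R}$ then $C_a=\mathbb{C}_{I_a}$ is itself a slice, while if $a\in\mathbb{R}$ then $C_a=\mathbb{H}$ contains every slice. Thus the one-variable slice regular polynomial ${\rm Res}(P,Q;q_1)$ vanishes on an entire slice $\mathbb{C}_I$, and the Identity Principle (Theorem \ref{Id}) forces ${\rm Res}(P,Q;q_1)\equiv 0$.

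For part (2) the argument is symmetric, with the two variables interchanged. Here Proposition \ref{mlineare} gives that $(q_2-b)$ being a common left factor is equivalent to $P$ and $Q$ vanishing on $\mathbb{H}\times\{b\}$; restricting to those first coordinates $a'\in C_b$ (so that $a'b=ba'$), Corollary \ref{zeriris} shows that the one-variable polynomial ${\rm Res}(P,Q;q_2)$ vanishes at $q_1=a'$ for all $a'\in C_b$, and since $C_b$ contains a full slice the Identity Principle again yields ${\rm Res}(P,Q;q_2)\equiv 0$. The only points demanding care are the non-commutativity obstruction noted above, which is what rules out the direct algebraic route and motivates passing through the zero locus, and the verification that $C_a$ (resp. $C_b$) contains a slice in both the real and non-real cases.
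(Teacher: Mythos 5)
Your proof is correct and takes essentially the same route as the paper's: both arguments rest on the ideal membership given by Theorem \ref{combinazione}, on Proposition \ref{mlineare}, and on the Identity Principle applied to the slice contained in $C_a$ (resp. $C_b$). The only difference is cosmetic --- the paper factors $(q_1-a)$ out of the combination $P*R+Q*S$ directly and reads off the vanishing on $\{a\}\times C_a$ from Proposition \ref{mlineare}, while you transfer the common zeros pointwise via Corollary \ref{zeriris}, which is itself a consequence of the same theorem.
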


\begin{proof}
Suppose first that $P(q_1,q_2)=(q_1-a)*P_1(q_1,q_2)$ and
$Q(q_1,q_2)=(q_1-a)*Q_1(q_1,q_2)$ with $P_1,Q_1 \in
\mathbb{H}[q_1,q_2]$. Thanks to Theorem \ref{combinazione} we have
that ${\rm Res}(P,Q;q_1)=(q_1-a)*R_1(q_1,q_2)$ for some $R_1\in
\mathbb{H}[q_1,q_2]$. Recalling Proposition \ref{mlineare}, we get
that ${\rm Res}(P,Q;q_1)$ vanishes on $\{a\}\times{C}_a$. Since ${\rm
  Res}(P,Q;q_1)$ is a polynomial in $\mathbb{H}[q_2]$, the Identity
Principle \ref{Id} yields that it is the zero polynomial.

Analogously, suppose now that $P(q_1,q_2)=(q_2-b)*P_2(q_1,q_2)$ and $Q(q_1,q_2)=(q_2-b)*Q_2(q_1,q_2)$ with $P_2,Q_2 \in \mathbb{H}[q_1,q_2]$.
Thanks to Theorem \ref{combinazione} 
we have that ${\rm Res}(P,Q;q_2)=(q_2-b)*R_2(q_1,q_2)$ for some $R_2\in \mathbb{H}[q_1,q_2]$. Recalling Proposition \ref{mlineare}, 
we get that  ${\rm Res}(P,Q;q_2)$	vanishes on
$\mathbb{H}\times \{b\}$. Since ${\rm Res}(P,Q;q_2)$ is a polynomial in $\mathbb{H}[q_1]$, the Identity Principle \ref{Id} yields that it is the zero polynomial.
\end{proof}

\noindent

In the complex setting,   
 the vanishing of one  of the resultants implies the existence of a common factor and this deeply relies on the  fact that $\mathbb{C}[z_1,\ldots,z_n]$ is
        a unique factorization domain.  In our
        setting, we obtain the following result.

\begin{proposition} Let $P$ and $Q$ be in $\mathbb{H}[q_1,q_2]$. If  \ ${\rm Res}(P,Q;q_1)\equiv 0$ or ${\rm Res}(P,Q;q_2)\equiv 0$, then  ${\rm Res}(P^s,Q^s;q_1)\equiv 0$ or, respectively, ${\rm Res}(P^s,Q^s;q_2)\equiv 0$.
\end{proposition}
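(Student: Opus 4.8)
The plan is to convert the hypothesis into a nontrivial $*$-Bézout relation via Lemma \ref{commonleft}, apply the symmetrization operator to that relation using its multiplicativity, and then feed the resulting identity back into Lemma \ref{commonleft} for the pair $P^s,Q^s$. I would treat only the statement with respect to $q_1$, since the statement for $q_2$ is obtained verbatim after exchanging the two variables and using part (2) of Lemma \ref{commonleft}; I also assume $P,Q\not\equiv0$, the hypothesis being otherwise degenerate.

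First I would invoke Lemma \ref{commonleft}(1): from ${\rm Res}(P,Q;q_1)\equiv0$ we obtain non-zero $H,K\in[\mathbb{H}[q_2]]_{\mathcal{SR}}[q_1]$, with $\deg_{q_1}H<\deg_{q_1}Q$ and $\deg_{q_1}K<\deg_{q_1}P$, satisfying $P*H+Q*K\equiv0$, that is $P*H\equiv -Q*K$ as slice regular functions. (Note that, since $\mathbb{H}[q_1,q_2]$ has no zero divisors by Remark \ref{ZD} and $P,Q\not\equiv0$, neither $H$ nor $K$ can vanish in a nontrivial relation.) Next I would symmetrize this identity: using $(f*g)^s=f^s*g^s$ and $(-f)^s=f^s$ from Remark \ref{sym}, the equal functions $P*H$ and $-Q*K$ have equal symmetrizations, whence $P^s*H^s=(P*H)^s=(-Q*K)^s=Q^s*K^s$. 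Setting $\tilde H:=H^s$ and $\tilde K:=-K^s$ this reads $P^s*\tilde H+Q^s*\tilde K\equiv0$, a $*$-Bézout relation for the pair $P^s,Q^s$.

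To close the argument through the converse direction of Lemma \ref{commonleft}(1) I must verify that $\tilde H,\tilde K$ are admissible. They are genuine slice regular polynomials in $[\mathbb{H}[q_2]]_{\mathcal{SR}}[q_1]$, and both are non-zero because $H,K\not\equiv0$ forces $H^s,K^s\not\equiv0$ by Remark \ref{ZD}. For the degree constraints I would observe that the leading $q_1$-coefficient of $f^s$ is the symmetrization of the leading $q_1$-coefficient of $f$, which is non-zero again by Remark \ref{ZD}; hence $\deg_{q_1}f^s=2\deg_{q_1}f$. Consequently $\deg_{q_1}P^s=2\deg_{q_1}P$ and $\deg_{q_1}Q^s=2\deg_{q_1}Q$, while $\deg_{q_1}\tilde H=2\deg_{q_1}H\le2(\deg_{q_1}Q-1)<\deg_{q_1}Q^s$ and $\deg_{q_1}\tilde K=2\deg_{q_1}K\le2(\deg_{q_1}P-1)<\deg_{q_1}P^s$. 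Thus Lemma \ref{commonleft}(1), applied now to $P^s,Q^s$, gives ${\rm Res}(P^s,Q^s;q_1)\equiv0$, as desired.

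The only delicate point is this last degree bookkeeping: the conclusion hinges on symmetrization doubling the relevant $q_1$-degrees and preserving the two strict inequalities, which is precisely what legitimizes re-entering Lemma \ref{commonleft}. The algebraic heart of the argument, the identity $(P*H)^s=(-Q*K)^s$, is by contrast immediate from the multiplicativity and sign-invariance of the symmetrization recorded in Remark \ref{sym}.
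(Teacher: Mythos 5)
Your proof is correct and follows the paper's strategy almost step for step: extract the B\'ezout relation $P*H+Q*K\equiv 0$ from Lemma \ref{commonleft}, symmetrize it using $(f*g)^s=f^s*g^s$ from Remark \ref{sym}, and verify that symmetrization doubles the $q_1$-degrees so that the strict degree inequalities are preserved. The one place you diverge is the concluding step: the paper, having observed that $P^s$ and $Q^s$ have real coefficients, closes by appealing to the classical theory of resultants in the commutative setting, whereas you close by applying the converse direction of Lemma \ref{commonleft} to the pair $P^s,Q^s$. Your variant is the more self-contained of the two, since it stays entirely within the non-commutative framework and avoids the implicit identification of the regular resultant of real-coefficient polynomials with the commutative one; it also directly yields the vanishing of ${\rm Res}(P^s,Q^s;q_1)$ in the sense of Definition \ref{risultante}. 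Your justification that $\deg_{q_1}f^s=2\deg_{q_1}f$ --- the leading $q_1$-coefficient of $f^s$ is the symmetrization of that of $f$, hence non-zero by Remark \ref{ZD} --- is a detail the paper asserts without proof, and it is the right argument.
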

\begin{proof}
	Suppose ${\rm Res}(P,Q;q_1)\equiv 0$. Then, thanks to Lemma \ref{commonleft}, there exist non-zero $H,K \in \mathbb{H}[q_1,q_1]$ 
	such that $P*H+Q*K\equiv 0,$ {with $\deg_{q_1}H<\deg_{q_1}Q$ and $\deg_{q_1}K<\deg_{q_1}P$.} 
	 Hence
\[P*H\equiv-Q*K, \text{ which, recalling \ref{sym}, implies } P^s*H^s\equiv Q^s*K^s\]
and hence 
\[P^s*H^s-Q^s*K^s \equiv 0.\]

\noindent Now $\deg_{q_1}H^s=2\deg_{q_1}H<2\deg_{q_1}Q=\deg_{q_1}Q^s$ and $\deg_{q_1}K^s=2\deg_{q_1}K<2\deg_{q_1}P=\deg_{q_1}P^s$, and $P^s$ and $Q^s$ have real coefficients. Hence, using the theory of resultants in the commutative setting (see, e.g., \cite{CLO}), we get that
	${\rm Res}(P^s,Q^s;q_1)\equiv 0$.
	The same argument shows that ${\rm Res}(P^s,Q^s;q_2)\equiv 0$.
\end{proof}
Applying again the theory of resultants in the commutative setting (see, e.g., \cite{CLO}) we obtain the following
\begin{corollary}
	Let $P$ and $Q$ be in $\mathbb{H}[q_1,q_2]$. If  \ ${\rm Res}(P,Q;q_1)\equiv 0$ or ${\rm Res}(P,Q;q_2)\equiv 0$, then, for any $I\in \mathbb S$, the restrictions of the symmetrized polynomials $P^s$ and $Q^s$ to the slice $\mathbb C_I$ have a common factor with positive degree in $q_2$ or $q_1$ respectively. 
\end{corollary}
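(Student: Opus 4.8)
The plan is to push the vanishing of the regular resultant down to the classical commutative theory of resultants, using the preceding Proposition as a bridge. Assume first that ${\rm Res}(P,Q;q_1)\equiv 0$; the case ${\rm Res}(P,Q;q_2)\equiv 0$ is symmetric. By the preceding Proposition this hypothesis already yields ${\rm Res}(P^s,Q^s;q_1)\equiv 0$, so from now on I would work exclusively with the symmetrized polynomials $P^s$ and $Q^s$, which by Remark \ref{sym} have real coefficients.

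The key observation is that, $P^s$ and $Q^s$ being slice preserving, all the noncommutativity disappears from the resultant. Writing $P^s=\sum_k q_1^k P^s_k(q_2)$ and $Q^s=\sum_k q_1^k Q^s_k(q_2)$, the entries $P^s_k,Q^s_k$ of the associated Sylvester matrix lie in the commutative subring of slice regular polynomials in $q_2$ with real coefficients; hence, on such a matrix, the Dieudonn\'e determinant ${\rm Det}^*$ (Proposition \ref{Ra}, Remark \ref{polyrep}) collapses to the ordinary determinant. Consequently the polynomial representative of ${\rm Res}(P^s,Q^s;q_1)$ is nothing but the classical Sylvester resultant, with respect to $q_1$, of the real two-variable polynomials $P^s$ and $Q^s$. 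Fixing $I\in\mathbb S$ and identifying $\mathbb C_I$ with $\mathbb C$, its restriction to $\mathbb C_I$ is exactly the classical resultant of $P^s|_{\mathbb C_I}$ and $Q^s|_{\mathbb C_I}$ with respect to $q_1$, and the hypothesis forces this classical resultant to vanish identically on $\mathbb C_I$.

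It then remains to invoke the classical criterion over the unique factorization domain $\mathbb C[q_1,q_2]$ (see \cite{CLO}): for two polynomials of positive degree in the variable being eliminated, the identical vanishing of their resultant is equivalent to the existence of a common factor of positive degree in that same variable. Applied to $P^s|_{\mathbb C_I}$ and $Q^s|_{\mathbb C_I}$ this produces, on each slice $\mathbb C_I$, a common factor of positive degree in $q_1$; exchanging the roles of $q_1$ and $q_2$ throughout handles the hypothesis ${\rm Res}(P,Q;q_2)\equiv 0$ and yields a common factor of positive degree in $q_2$.

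The step that needs the most care---and the only genuine obstacle---is the identification carried out in the second paragraph: one has to be certain that restricting the polynomial representative of the regular resultant to a slice reproduces precisely the classical Sylvester resultant, which rests entirely on $P^s$ and $Q^s$ being slice preserving, so that ${\rm Det}^*$ and the ordinary determinant agree on $\mathbb C_I$. A secondary point is the standing hypothesis in \cite{CLO} that both polynomials have positive degree in the eliminated variable: I would verify this directly for $P^s$ and $Q^s$, or treat separately the degenerate case in which a leading coefficient drops, so that the common factor furnished by the criterion genuinely has positive degree in the claimed variable.
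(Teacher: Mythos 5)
Your argument follows exactly the route the paper takes: the paper offers no written proof of this corollary beyond the sentence ``applying again the theory of resultants in the commutative setting'', which is precisely the reduction you carry out --- pass to $P^s,Q^s$ via the preceding Proposition, observe that for slice preserving polynomials the Dieudonn\'e determinant of the Sylvester matrix collapses to the ordinary one, restrict to a slice, and invoke the classical criterion over the unique factorization domain $\mathbb{C}_I[q_1,q_2]$. The two technical points you isolate (that restriction to $\mathbb C_I$ commutes with taking the determinant because all entries are real-coefficient polynomials, and the standing positive-degree hypothesis in the classical criterion) are exactly the ones the paper glosses over, and your treatment of them is sound.

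One discrepancy deserves attention: you conclude that ${\rm Res}(P,Q;q_1)\equiv 0$ yields a common factor of positive degree in $q_1$, whereas the statement as printed asserts positive degree in $q_2$ (and $q_1$ for the other hypothesis). The classical criterion is on your side: ${\rm Res}(P,Q;q_1)$ is built from the Sylvester matrix that eliminates $q_1$, and the identical vanishing of a resultant eliminating $q_1$ is equivalent, over a UFD, to a common factor of positive degree in $q_1$. Proposition \ref{cor} confirms this reading: a common left factor $q_1-a$, which has degree zero in $q_2$, already forces ${\rm Res}(P,Q;q_1)\equiv 0$, so in that situation $P^s$ and $Q^s$ need have no common factor of positive degree in $q_2$ at all. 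The ``respectively'' in the statement therefore appears to have the two variables transposed; your proof establishes the corrected version, which is the one consistent with the rest of the paper.
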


The notion of regular resultant can be also applied to study the
multiplicity of special zeros of slice regular polynomials.

\begin{definition}\label{mult}
Let $P$ be a slice regular polynomial vanishing at $\{a\}\times C_a$. We say
that the {\em multiplicity} of the zero $\{a\}\times C_a$ of $P$ is
$m\in \mathbb N$ if there exists a slice regular polynomial $Q$ which does not
vanish identically on $\{a\}\times C_a$, such that
\[P(q_1,q_2)=(q_1-a)^{*m}* Q(q_1,q_2).\]
Let $P$ be a slice regular polynomial vanishing at
$\mathbb{H}\times\{b\}$. We say that the {\em multiplicity} of the
zero $\mathbb{H}\times\{b\}$ of $P$ is $n\in \mathbb N$ if there
exists a slice regular polynomial $S$ which does not vanish identically on
$\mathbb{H}\times\{b\}$, such that
\[P(q_1,q_2)=(q_2-b)^{*n}* S(q_1,q_2).\]
\end{definition} 

\begin{proposition}
	Let $(a,b)$ with $b \in C_a$, let $P$ be a slice regular
        polynomial vanishing at ${(a,b)}$ and let $P_{q_1}$ denote
        the partial Cullen derivative of $P$ with respect to $q_1$.
If the multiplicity $m$ of $\{a\}\times C_a$ is greater or equal to 2,
then ${\rm Res}(P, P_{q_1}; q_2)\equiv 0$.
        	\end{proposition}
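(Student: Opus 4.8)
\noindent The plan is to show that $P$ and its partial Cullen derivative $P_{q_1}$ share the common left factor $(q_1-a)$, and then to run the argument used in the proof of Proposition \ref{cor}. Since the multiplicity of $\{a\}\times C_a$ is $m\geq 2$, Definition \ref{mult} furnishes $R\in\mathbb{H}[q_1,q_2]$ (namely $R=(q_1-a)^{*(m-2)}*Q$) with $P=(q_1-a)^{*2}*R$. Using the Leibniz rule for the partial Cullen derivative with respect to the $*$-product, together with $\partial_{q_1}(q_1-a)=1$, I first record
\[\partial_{q_1}\big((q_1-a)^{*2}\big)=(q_1-a)+(q_1-a)=2(q_1-a),\]
whence
\[P_{q_1}=2(q_1-a)*R+(q_1-a)^{*2}*\partial_{q_1}R=(q_1-a)*S,\qquad S:=2R+(q_1-a)*\partial_{q_1}R.\]
Thus $(q_1-a)$ is a common left $*$-factor of $P=(q_1-a)*\big((q_1-a)*R\big)$ and of $P_{q_1}=(q_1-a)*S$.

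\noindent Next I invoke Theorem \ref{combinazione}: the regular resultant belongs to the right ideal $\langle P,P_{q_1}\rangle$, so that ${\rm Res}(P,P_{q_1};q_2)=P*U+P_{q_1}*V$ for suitable $U,V\in\mathbb{H}[q_1,q_2]$. Substituting the two factorizations and collecting the left factor yields
\[{\rm Res}(P,P_{q_1};q_2)=(q_1-a)*\big((q_1-a)*R*U+S*V\big),\]
so the resultant admits $(q_1-a)$ as a left $*$-factor. By Proposition \ref{mlineare} it then vanishes on $\{a\}\times C_a$. Exactly as in the proof of Proposition \ref{cor}, since ${\rm Res}(P,P_{q_1};q_2)$ is a polynomial in $\mathbb{H}[q_1]$ and $C_a$ is the full slice $\mathbb{C}_{I_a}$ when $a\notin\mathbb{R}$ (and all of $\mathbb{H}$ when $a\in\mathbb{R}$), the Identity Principle (Theorem \ref{Id}) then forces ${\rm Res}(P,P_{q_1};q_2)\equiv 0$.

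\noindent The step I expect to require the most care is the Leibniz computation establishing the common factor: one must check that $\partial_{q_1}\big((q_1-a)^{*2}\big)=2(q_1-a)$ (which uses that the real constant $1=\partial_{q_1}(q_1-a)$ is $*$-central) and, more generally, that differentiating $(q_1-a)^{*2}*R$ in $q_1$ always leaves a single surviving factor $(q_1-a)$ on the left. Once this common factor is in hand, the extraction of $(q_1-a)$ from the resultant via Theorem \ref{combinazione} and the concluding application of the Identity Principle follow the already-established pattern of Proposition \ref{cor}, so no genuinely new difficulty should arise there.
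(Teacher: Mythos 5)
Your Leibniz computation and the extraction of the common left factor are correct and coincide with the paper's argument: from $P=(q_1-a)^{*m}*Q$ with $m\geq 2$ one gets $P_{q_1}=m(q_1-a)^{*(m-1)}*Q+(q_1-a)^{*m}*Q_{q_1}$, so $(q_1-a)$ is a common left $*$-factor of $P$ and $P_{q_1}$. The gap is in your last step. By Theorem \ref{combinazione} and Proposition \ref{mlineare} you correctly deduce that ${\rm Res}(P,P_{q_1};q_2)$ has $(q_1-a)$ as a left factor and hence vanishes on $\{a\}\times C_a$. But ${\rm Res}(P,P_{q_1};q_2)$ is a polynomial in $q_1$ \emph{alone}, and on the set $\{a\}\times C_a$ the first coordinate is frozen at $q_1=a$: the slice-worth of points sits entirely in the $q_2$ coordinate, on which this resultant does not depend. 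So all you obtain is the single evaluation at $q_1=a$, and Theorem \ref{Id} cannot be invoked. You have crossed the variables relative to Proposition \ref{cor}: there, in case (1), the factor is $(q_1-a)$ while the resultant ${\rm Res}(\cdot\,;q_1)$ lies in $\mathbb{H}[q_2]$, so the vanishing set does supply a whole slice in the variable of the resultant; that matching is exactly what makes the Identity Principle applicable, and it fails in your configuration.

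Moreover the step cannot be repaired, because the implication ``common left factor $(q_1-a)$ $\Rightarrow$ ${\rm Res}(\cdot\,;q_2)\equiv 0$'' is false: Example \ref{esres} already exhibits $P,Q$ with common left factor $(q_1-i)$ and ${\rm Res}(P,Q;q_2)=(q_1-i)^{*2}*(k-j)\not\equiv 0$, and for the proposition itself $P=(q_1-i)^{*2}*(q_2-j)$ (for which $\{i\}\times C_i$ has multiplicity exactly $2$ and $P_{q_1}=2(q_1-i)*(q_2-j)$) gives, by a direct ${\rm Det}^*_2$ computation, ${\rm Res}(P,P_{q_1};q_2)$ with polynomial representative $4(q_1^2+1)k$, which vanishes only for $q_1\in\mathbb{S}$, not identically. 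What the common factor $(q_1-a)$ actually yields, via Proposition \ref{cor}(1), is ${\rm Res}(P,P_{q_1};q_1)\equiv 0$; and indeed the paper's own proof concludes precisely by citing Proposition \ref{cor}, so the ``$q_2$'' in the printed statement must be read as a typo for ``$q_1$'' (the companion proposition about $\mathbb{H}\times\{b\}$ has the mirror-image typo). With that correction your proof is essentially the paper's: ${\rm Res}(P,P_{q_1};q_1)$ lies in $\mathbb{H}[q_2]$, its left factor $(q_1-a)$ makes it vanish for all $q_2\in C_a\supseteq\mathbb{C}_{I_a}$, and the Identity Principle then legitimately forces it to vanish identically.
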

	
\begin{proof}

Recalling Definition \ref{mult}, we can write
		\[P(q_1,q_2)=(q_1-a)^{*m}* Q(q_1,q_2)\]
	with $Q$ a slice regular polynomial which does not vanish at $(a,b)$ for any $b\in C_a$ and $m\in \mathbb{N}$.
	Clearly $P$ belongs to the right ideal generated by $q_1-a$.
	Moreover the partial Cullen derivative with respect to $q_1$ is   
        \[P_{q_1}(q_1,q_2)=m(q_1-a)^{*(m-1)}* Q(q_1,q_2)+(q_1-a)^{*m}*Q_{q_1}(q_1,q_2).\]
Thus $P_{q_1}$ has  $q_1-a$  as a left factor if $m\geq 2$.
Hence, by Proposition \ref{cor}, the resultant ${\rm Res}(P, P_{q_1}; q_2)$ is identically zero.


\end{proof}
\noindent Similar arguments also apply for the case of slice regular polynomials $P$ vanishing identically on $\mathbb{H}\times \{b\}$.	
\begin{proposition}
	Let $(a,b)$ with $b \in C_a$, let $P$ be a slice regular 
	polynomial vanishing at $(a,b)$ and let $P_{q_2}$ denote
	the partial Cullen derivative of $P$ with respect to $q_2$.
	If the multiplicity $n$ of $\mathbb{H}\times \{b\}$ is greater or equal to 2,
	then ${\rm Res}(P, P_{q_2}; q_1)\equiv 0$.
\end{proposition}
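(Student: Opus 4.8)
The plan is to mirror the proof of the preceding proposition, with the roles of the two variables exchanged. First I would apply Definition \ref{mult}: since the multiplicity of $\mathbb{H}\times\{b\}$ is $n\geq 2$, there is a slice regular polynomial $S\in\mathbb{H}[q_1,q_2]$, not vanishing identically on $\mathbb{H}\times\{b\}$, such that
\[P(q_1,q_2)=(q_2-b)^{*n}*S(q_1,q_2).\]
In particular $(q_2-b)$ is a left factor of $P$, so $P$ lies in the right ideal generated by $(q_2-b)$.

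Next I would differentiate this factorization. By the Leibniz rule for the partial Cullen derivative with respect to $q_2$ (which holds for the $*$-product, as recalled after the definition of the partial Cullen derivatives), I obtain
\[P_{q_2}(q_1,q_2)=n\,(q_2-b)^{*(n-1)}*S(q_1,q_2)+(q_2-b)^{*n}*S_{q_2}(q_1,q_2).\]
The only routine point here is the identity $\big((q_2-b)^{*n}\big)_{q_2}=n\,(q_2-b)^{*(n-1)}$, which follows by an easy induction on $n$ from the Leibniz rule together with $(q_2-b)_{q_2}\equiv 1$. Since $n\geq 2$, each summand on the right still contains at least one factor $(q_2-b)$ on the left, so $(q_2-b)$ can be collected on the left of $P_{q_2}$ as well; hence $(q_2-b)$ is a common left factor of $P$ and $P_{q_2}$.

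At this point the statement follows from Proposition \ref{cor}, exactly as in the preceding proof: the presence of a common left factor of the form $(q_2-b)$ forces the corresponding regular resultant to vanish identically, giving ${\rm Res}(P,P_{q_2};q_1)\equiv 0$.

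I do not expect a genuine obstacle, since the argument is essentially formal. The only place that requires attention is the differentiation step: one must apply the Leibniz rule to a $*$-power rather than an ordinary power, and one must extract the factor $(q_2-b)$ on the \emph{left}, because the $*$-product is non-commutative and, by Proposition \ref{mlineare}, it is precisely left divisibility by $(q_2-b)$ that is tied to vanishing on $\mathbb{H}\times\{b\}$.
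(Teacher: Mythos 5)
Your factorization and differentiation steps are exactly right: writing $P=(q_2-b)^{*n}*S$ via Definition \ref{mult}, applying the Leibniz rule to get $P_{q_2}=n\,(q_2-b)^{*(n-1)}*S+(q_2-b)^{*n}*S_{q_2}$, and collecting $(q_2-b)$ on the left when $n\geq 2$ is precisely the computation the paper intends (its own ``proof'' here is only the remark that the preceding argument applies with the variables exchanged). The gap is in the final citation. Proposition \ref{cor}(2) converts a common left factor $(q_2-b)$ into the vanishing of ${\rm Res}(\cdot,\cdot;q_2)$, \emph{not} of ${\rm Res}(\cdot,\cdot;q_1)$: the mechanism is that ${\rm Res}(P,Q;q_2)$ lies in $\langle P,Q\rangle\cap\mathbb{H}[q_1]$, hence vanishes on all of $\mathbb{H}\times\{b\}$ while depending only on $q_1$, which forces it to be identically zero; the same reasoning applied to ${\rm Res}(P,Q;q_1)\in\mathbb{H}[q_2]$ only gives vanishing at the single point $q_2=b$. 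Compare Example \ref{esres}, where the common left factor $(q_1-i)$ kills ${\rm Res}(P,Q;q_1)$ but leaves ${\rm Res}(P,Q;q_2)=(q_1-i)^{*2}*(k-j)\not\equiv 0$. So what your argument actually establishes is ${\rm Res}(P,P_{q_2};q_2)\equiv 0$ --- you even call it ``the corresponding regular resultant'', which it is --- but you then write down the other one. The conclusion as printed does not follow, and indeed seems false in general: for $P=q_2^{*2}*(q_1+q_2)=q_1q_2^2+q_2^3$ and $b=0$ one has $P_{q_2}=2q_1q_2+3q_2^2$, the Sylvester matrix with respect to $q_1$ is $\bigl(\begin{smallmatrix} q_2^3 & 3q_2^2\\ q_2^2 & 2q_2\end{smallmatrix}\bigr)$, and ${\rm Res}(P,P_{q_2};q_1)=[2q_2^4-3q_2^4]=[-q_2^4]\neq[0]$, although the multiplicity of $\mathbb{H}\times\{0\}$ is $2$.

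To be fair, the identical variable mismatch already occurs in the paper's preceding proposition and its proof (a common left factor $(q_1-a)$ yields ${\rm Res}(P,P_{q_1};q_1)\equiv 0$ by Proposition \ref{cor}(1), not ${\rm Res}(P,P_{q_1};q_2)\equiv 0$ as claimed there), so you have faithfully reproduced the source's argument. But as a proof of the statement as written it fails at the last step; the statement should presumably conclude ${\rm Res}(P,P_{q_2};q_2)\equiv 0$, and with that correction your proof is complete.
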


For slice regular polynomials in one variable, the resultant involved in these considerations is usually known as {\em discriminant}.

\section*{Acknowledgments}
The authors are partially supported by:
GNSAGA-INdAM via the project ``Hypercomplex function theory and applications''; the first author is also partially supported by MUR project PRIN 2022 ``Real and Complex Manifolds: Geometry and Holomorphic Dynamics'', the second and the third authors are also partially supported by MUR projects PRIN 2022 ``Interactions between Geometric Structures
and Function Theories'' and Finanziamento Premiale FOE 2014 ``Splines for accUrate NumeRics: adaptIve models for Simulation Environments''.

\section*{Declarations}
\noindent The authors have no competing interests to declare that are relevant to the content of this article.\\


\begin{thebibliography}{99}

\bibitem{cileni}  A. E. Almendras Valdebenito--J. A. Briones Donoso--A. L. Tironi ``Resultants of skew polynomials over division rings''
  {\tt  arXiv:2109.13092v1}, (2021)
\bibitem{Ash} H. Aslaksen ``Quaternionic Determinants'' {\sc Math. Intelligencer}, {\bf 18}, p. 57--65, (1996)
\bibitem{bren} J.L. Brenner ``Applications of the Dieudonn\'e Determinant'' {\sc  Linear Algebra App.}, {\bf 1}, p. 511--536, (1968)  
\bibitem{Cas} B. Casselman ``Essays on the structure of reductive groups -- Determinants '' notes last revised in 2021
\bibitem{Cohn} P.M. Cohn {\bf Skew fields --
  Theory of general division rings}
Encyclopedia of Mathematics and its Applications,  Cambridge University Press  {\bf 57} (1995)
\bibitem{CLO} D. Cox-- J. Little--D. O'Shea
{\bf Ideals, Varieties and Algorithms},
Fourth edition
Undergrad. Texts Math.
Springer, Cham, (2015)
\bibitem{Dieudonne} J. Dieudonn\'e,  ``Les d\'eterminants sur un corps non commutatif''  {\sc Bull. Soc. Math. France}, {\bf 71}, p. 27--45 (1943)
\bibitem{erik} A. Lj. Eri\'c ``The resultant of non-commutative polynomials'' {\sc Mat. Vesnik}  {\bf 60},  p. 3--8, (2008)
\bibitem{FGGSS} R.T. Farouki--G. Gentili--C. Giannelli--A. Sestini--C. Stoppato, ``A comprehensive characterization of the set of polynomial curves with rational rotation-minimizing frames'' {\sc Adv. Comput. Math.}, {\bf 43} (1), p. 1--24, (2017)
\bibitem{libroGSS} G. Gentili--C. Stoppato--D.C. Struppa  {\bf Regular Functions of a Quaternionic Variable}, Springer Monographs in Mathematics, (2022)
\bibitem{GST} G. Gentili--C. Stoppato--T. Trinci  ``Zeros of slice functions and polynomials over dual quaternions'' {\sc Trans. A.M.S.} {\bf 374}, p. 5509-5544, (2021)

\bibitem{severalvariables} R. Ghiloni--A. Perotti ``Slice regular functions in several variables'' {\sc Math. Z}  {\bf 302} (1) p. 295--351, (2022)
\bibitem{GSV-LAA}  A. Gori--G. Sarfatti--F. Vlacci ``Zero sets and  Nullstellensatz type theorems for slice regular quaternionic polynomials'', {\tt  arXiv:2212.02301v2} (2023)
\bibitem{cramer2} I. I. Kyrchei, ``Cramer's Rule for Quaternionic Systems of Linear Equations'' {\sc J. Math. Sci. (N.Y.)}, {\bf 155}  (6) p. 839--858, (2008)
\bibitem{lercher} J. Lercher-- H.-P. Schr\"ocker, ``A multiplication technique for the factorization of bivariate quaternionic polynomials'' {\sc Adv. Appl. Clifford Algebr.} {\bf 32}, p. 1--23, (2022).
\bibitem{PV} J. Prezelj--F. Vlacci ``Divergence zero quaternionic vector fields and
Hamming graphs",  {\sc Ars Math. Contemp.} {\bf 19}, p. 189--208, (2020) 
\bibitem{Rasheed} R. Rasheed ``Resultant-based Elimination for Skew Polynomials'', Proceedings of the 23rd International Symposium on Symbolic and Numeric Algorithms for Scientific Computing (SYNASC '21), p. 11-18. IEEE, (2021)
\bibitem{cramer1} V. Retakh--L. Wilson, {\bf Advanced Course on Quasideterminants and Universal Localization}, Centre de Recerca Matematica(CRM), Bellaterra (Barcelona), Spain, 2007, notes of the course.
\bibitem{singularities} C. Stoppato ``Singularities of slice regular functions'' {\sc Math. Nach.} {\bf 285}, p. 1274--1293, (2012)
\bibitem{ZZ}X. Zhao--Y. Zhang ``Resultants of quaternionic polynomials''
  {\sc Hacet. J. Math. Stat.} {\bf 48}, (5), p. 1304--1311, (2019)
  

  
\end{thebibliography}
\end{document}